\renewcommand*\author[1]{%
  \stepcounter{author}%
  \ifnum\c@author=1
    \gdef\@author{#1}%
  \else
    \xdef\@author{\unexpanded\expandafter{\@author\and#1}}%
  \fi
  \csgdef{author@\the\c@author}{#1}}
\newcommand*\email[1]{%
  \csgdef{email@\the\c@author}{#1}}
\newcommand*\address[1]{%
  \csgdef{address@\the\c@author}{#1}}
  \xdef\author@count{\the\c@author}%
\newcommand*\print@authors{%
  \ifnum\c@author>\author@count
  \else
    \print@author{\the\c@author}%
    \advance\c@author by 1
    \expandafter\print@authors
  \fi}
\newcommand*\print@author[1]{%
  \par\medskip
  \noindent
  \begin{tabular}{@{}l@{}}%
    \csuse{author@#1}\\
    \csuse{address@#1}\\
    \textit{e-mail:}
    \texttt{\csuse{email@#1}}
  \end{tabular}
}
\newcommand{\fundingfootnote}[1]{%
        \begin{NoHyper}%
           \renewcommand{\thefootnote}{}
           \footnote{#1}%
           \addtocounter{footnote}{-1}%
        \end{NoHyper}%
}
\renewcommand*{\title}[2][]{%
  \ifstrempty{#1}{
  \gdef\shorttitle{#2}\gdef\@title{#2}
  }{\gdef\shorttitle{#1}\gdef\@title{#2}%
  }
}
\renewenvironment{abstract}%
  {\usekomafont{abstract}
   \begin{description}[%
        leftmargin=3em,
        labelindent=3em,
        rightmargin=3em,
        labelwidth=0pt,
        parsep=0pt,
        listparindent=\parindent, 
        style=sameline,
        font=\normalfont\usekomafont{abstracttitle}]
   \item[\abstractname.]}%
  {\end{description}}
\newcommand*{\keywordsname}{Keywords}
\newcommand*{\mscname}[1]{Mathematics Subject Classification (#1)}
\newcommand{\keywords}[1]{\item[\keywordsname.] #1.}  
\newcommand{\subjclass}[2][2010]{%
\item[\mscname{#1}.] #2.}
\newtheorem{thm}{Theorem}[section]
\newtheorem{cor}[thm]{Corollary}
\newtheorem{lemma}[thm]{Lemma}
\theoremstyle{definition}
\newtheorem{defi}[thm]{Definition}
\newtheorem{remark}[thm]{Remark}
\newlist{enumthm}{enumerate}{1} 
\setlist[enumthm]{label= \textup{(\roman*)}}
\renewcommand{\geq}{\geqslant}
\renewcommand{\leq}{\leqslant}
\renewcommand{\phi}{\varphi}
\newcommand{\eps}{\varepsilon}
\newcommand{\defemph}[1]{\textbf{#1}} 
\newcommand{\ints}{\mathbb{Z}}
\newcommand{\reals}{\mathbb{R}}
\newcommand{\iso}{\cong}    
\DeclarePairedDelimiterX{\skp}[2]{\langle}{\rangle}{#1,#2} 
\newcommand{\eskp}{\skp{\:}{\,}}    
\DeclarePairedDelimiter{\abs}{\lvert}{\rvert}
\DeclarePairedDelimiter{\norm}{\lVert}{\rVert}
\DeclarePairedDelimiter{\erz}{\langle}{\rangle}
\DeclarePairedDelimiterX{\menge}[2]{\{}{\}}{
      \, #1 : %
      \allowbreak\nonscript\, \mathopen{} #2 \,}
\DeclareMathOperator{\Ker}{Ker}        
\DeclareMathOperator{\LinH}{span}
\DeclareMathOperator{\id}{Id}
\DeclareMathOperator{\latt}{\mathbb{L}}
\DeclareMathOperator{\mindis}{MinDist}
\DeclareMathOperator{\Mvecs}{MinVecs}
\newcommand*{\mytitle}{%
   Lattices of %
   finite abelian groups}
\newcommand*{\myauthor}{Frieder Ladisch}
\newcommand*{\mykeywords}{Eutactic lattice,
  minimal vectors, finite abelian group}
\title{\mytitle}                           
\author{\myauthor%
}
\address{Universität Rostock \\
         Institut für Mathematik \\
         18051 Rostock (Germany)}
\email{frieder.ladisch@uni-rostock.de}
\begin{document}
\date{}
\maketitle

\begin{abstract}
\fundingfootnote{%
        Author supported by Deutsche Forschungsgemeinschaft (DFG),
        Project SCHU1503/7-1.}%
  We study certain lattices constructed from finite abelian groups.
  We show that such a lattice is eutactic,
  thereby confirming a conjecture by
  Böttcher, Eisenbarth, Fukshansky, Garcia, Maharaj.
  Our methods also yield simpler proofs of two known results:
  First, such a lattice is strongly eutactic if and only if
  the abelian group has odd order or is elementary abelian.
  Second, such a lattice has a basis of minimal vectors,
  except for the cyclic group of order~$4$.
\subjclass[2010]{Primary 11H31, Secondary 11H55}
\keywords{\mykeywords}  
\end{abstract}

\section{Introduction}
For each finite abelian group~$A$, one can construct a certain lattice
$\latt(A)$ of rank $\abs{A}-1$.
(We recall the construction of $\latt(A)$ in \cref{sec:lattA} below.)
For cyclic~$A$, the lattice $\latt(A)$
probably appeared first in a paper
by E.~S.~Barnes~\cite{Barnes57} and is therefore often called
the \emph{Barnes lattice}.
The construction of $\latt(A)$ also generalizes a family 
of lattices coming from elliptic curves over finite
fields~\cite{FukshanskyMaharaj14}.
For general abelian groups~$A$,
the lattice~$\latt(A)$ has been studied by
A.~Böttcher, S.~Eisenbarth, L.~Fukshansky, S.~R.~Garcia,
H.~Maharaj \cite{BoettcherEFGM19,BoettcherFGM15},
and by R. Bacher~\cite{Bacher15}, among others.

One of the results of Böttcher et al.~\cite{BoettcherEFGM19} is
that the lattice $\latt(A)$ is strongly eutactic if and only if
$A$ has odd order or is an elementary abelian $2$-group.
They conjectured that $\latt(A)$ is always eutactic
(except when $A\iso C_4$, the cyclic group of order~$4$).
The main result of this paper,
\cref{t:la_eutactic}, is that this conjecture is true.

Eutactic lattices are interesting due to a connection to 
sphere packings.
By a classical result of Voronoi~\cite[Theorem~4.6.3]{Martinet03PLES},
a lattice is extreme, 
that is, a local maximum of the packing density,
if and only if it is eutactic and perfect.
Moreover, A.~Schürmann~\cite{Schuermann10} has shown that perfect,
strongly eutactic lattices are periodic extreme. 
R. Bacher~\cite{Bacher15} has shown that $\latt(A)$ is perfect,
except for some groups~$A$ of order at most~$8$.
Thus $\latt(A)$ is extreme, except for some $A$ with $\abs{A}\leq 8$.

One key ingredient in our proof is the observation
that the lattice $\latt(A)$
is in fact the square of the augmentation ideal of the group ring
(see \cref{l:latt_augsquare}).
From this observation, 
we derive in \cref{sec:minL} 
a very useful parametrization of the set of  minimal vectors
of $\latt(A)$ 
(the set of nonzero vectors in $\latt(A)$ of minimal length).
Böttcher, Fukshansky, Garcia, 
and Maharaj~\cite{BoettcherFGM15}
showed that the lattice $\latt(A)$ 
is generated by minimal vectors
(except when $A\iso C_4$),
and has in fact a basis consisting of minimal vectors.
In general, a lattice generated by minimal vectors 
need not have a basis of 
minimal vectors~\cite{ConwaySloane95}.
Such lattices exist in dimension $\geq 10$,
but not in lower dimensions~\cite{MartinetSchuermann12}.
The lattices considered in this paper, however, 
do not exhibit this phenomenon.
In \cref{sec:basmin}, we use our methods to give a short and 
very elementary proof
that $\latt(A)$ ($A\not\iso C_4$)
has a basis of minimal vectors.
The proof is by direct computation in the group ring,
and we indicate how to find different bases of minimal vectors.
\Cref{sec:basmin} is not needed for the final \cref{sec:eutaxy},
where eutaxy is treated.

\section{The lattice of a finite abelian group}
\label{sec:lattA}

Let $A$ be a finite abelian group, written
\emph{multiplicatively}.
The \defemph{group ring} $\ints A$ of $A$ is by definition the set
of formal sums
\[ \ints A := 
    \menge[\bigg]{\sum_{a\in A} r_a a }{ r_a\in \ints } \,.
\]
Multiplication in $\ints A$ is induced by the multiplication in the 
group~$A$, extended distributively.
The \defemph{augmentation ideal} $\Delta A$ of the group ring
is the kernel of the augmentation map 
(coefficient sum) $\ints A \to \ints$, that is,
\[ \Delta A := 
   \menge[\bigg]{ \sum_{a\in A} r_a a 
      }{ \sum_{a\in A} r_a = 0, \, r_a\in \ints } \,.
\]
We consider the exponential type homomorphism
\[\psi_A \colon \ints A \to A
  , \quad
  \psi_A \big( \sum_{a\in A}k_a a \big) = \prod_{a\in A} a^{k_a}.
\] 

\begin{defi}
The \defemph{lattice} $\latt(A)$ of the finite abelian group $A$
is defined as 
\[ \latt(A) := \Ker(\psi_A) \cap \Delta A.
\]
\end{defi}

This is the lattice that has been studied before by
A.~Böttcher, S.~Eisenbarth, L.~Fukshansky, S.~R.~Garcia,
H. Maharaj~\cite{BoettcherEFGM19,BoettcherFGM15},
and R. Bacher~\cite{Bacher15}, 
with the difference that additive notation for the abelian group~$A$
is used in the cited papers.
In the present paper, 
we employ multiplicative notation for $A$ since then the notation for the
group ring is more convenient.

\begin{lemma}\label{l:latt_augsquare}
  $\latt(A) =  (\Delta A)^2$,
  the square of the augmentation ideal.
\end{lemma}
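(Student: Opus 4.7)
The plan is to prove the two inclusions $(\Delta A)^2 \subseteq \latt(A)$ and $\latt(A) \subseteq (\Delta A)^2$ separately. The first is a routine check on generators; the real content lies in the second inclusion, which will be handled by a short computation modulo $(\Delta A)^2$ based on one key congruence.

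For the inclusion $(\Delta A)^2 \subseteq \latt(A)$, I would note that $(\Delta A)^2$ is generated as an abelian group by the elements $(a-1)(b-1) = ab - a - b + 1$ with $a,b \in A$. Each such generator visibly has coefficient sum zero (hence lies in $\Delta A$) and is mapped by $\psi_A$ to $(ab)\cdot a^{-1}\cdot b^{-1}\cdot 1 = 1$, so it lies in $\Ker \psi_A$.

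For the reverse inclusion, take $x = \sum_{a\in A} r_a a \in \latt(A)$. The augmentation condition $\sum r_a = 0$ lets me rewrite $x = \sum_{a} r_a (a-1)$. The key observation is the congruence
\[
   (a - 1) + (b - 1) \;\equiv\; ab - 1 \pmod{(\Delta A)^2},
\]
which is just a restatement of the identity $(ab-1) - (a-1) - (b-1) = (a-1)(b-1)$. A short induction on $\lvert r\rvert$, using this congruence together with its consequence $-(a-1) \equiv a^{-1} - 1 \pmod{(\Delta A)^2}$ (coming from $(a-1)(a^{-1}-1) = -(a + a^{-1} - 2)$), gives $r(a-1) \equiv a^r - 1 \pmod{(\Delta A)^2}$ for every $r\in\ints$. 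Summing and iterating the key congruence,
\[
  x \;=\; \sum_{a} r_a(a-1) \;\equiv\; \prod_{a} a^{r_a} - 1 \;=\; 1 - 1 \;=\; 0 \pmod{(\Delta A)^2},
\]
where the last equality uses the defining condition $x \in \Ker \psi_A$. Thus $x \in (\Delta A)^2$.

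There is no real obstacle here: the whole argument pivots on the single identity $(ab-1) - (a-1) - (b-1) = (a-1)(b-1)$, which is precisely the statement that in the quotient $\Delta A /(\Delta A)^2$ the assignment $a \mapsto a-1$ is a group homomorphism from $A$; the lemma is essentially the well-known fact that this induces an isomorphism $A \iso \Delta A/(\Delta A)^2$, specialized to an abelian group and phrased through the map $\psi_A$.
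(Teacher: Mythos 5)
Your proof is correct and follows essentially the same route as the paper: both reduce the nontrivial inclusion $\latt(A)\subseteq(\Delta A)^2$ to the congruence $d\equiv\psi_A(d)-1 \pmod{(\Delta A)^2}$ for $d\in\Delta A$, derived from the identity $(a-1)+(b-1)=(ab-1)-(a-1)(b-1)$ (the paper uses a second identity for differences where you use $-(a-1)\equiv a^{-1}-1$, a cosmetic variation). Your closing remark that this is the classical isomorphism $A\iso\Delta A/(\Delta A)^2$ is exactly the "well known" fact the paper cites.
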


By definition, the square of an ideal $I$ in a ring is the
additive subgroup generated by all products $ab$ with
$a$, $b\in I$.

\begin{proof}[Proof of \cref{l:latt_augsquare}]
The elements of the form
$a-1$, $1\neq a\in A$, form a $\ints$-basis of $\Delta A$.
Thus $(\Delta A)^2$ as $\ints$-module is spanned by elements
of the form $(a-1)(b-1)$ with $a$, $b\in A$.
As these elements are in $\Ker \psi_A$,
it follows that $(\Delta A)^2 \subseteq \latt(A)$.

For the converse inclusion, we need 
the following congruence which seems to be well known
\cite[Lemma~2.1]{CliffSehgalWeiss81}, \cite[p.~7]{Hoechsmann92}:
\[ d  \equiv \psi_A(d)-1 \mod (\Delta A)^2
   \quad \text{for all $d \in \Delta A$}. 
\]
This congruence follows from repeatedly applying the
identities
\begin{align*}
   (a-1) + (b-1) &= (ab-1) - (a-1)(b-1) \quad\text{and}
   \\
   (a-1) - (b-1) &= (ab^{-1}-1) - (a-b)(b^{-1}-1).
\end{align*}
The congruence yields $\psi_A(d) = 1 \implies d\in (\Delta A)^2$
for $d\in \Delta A$, and thus the lemma.            
\end{proof}

\section{Elements of minimal length}
\label{sec:minL}

We consider $\ints A$ as a subset of $\reals A$, 
and $\reals A$ as a Euclidean space with respect to the usual scalar
product
\[ \left\langle
    \sum_{a\in A} x_a a, \sum_{a\in A} y_a a 
    \right\rangle
   = \sum_{a\in A} x_a y_a.
\]
As usual,
$\norm{x} := \sqrt{ \langle x, x \rangle }$,
this is called the length or Euclidean norm of $x\in \reals A$.

The \defemph{minimum distance} of a lattice $\Lambda$
is defined as
\[ \mindis(\Lambda):= \min \menge{\norm{x}}{
                              x\in \Lambda,\, x\neq 0
                           }.
\]

As a $\ints$-module, 
$(\Delta A)^2$ is spanned by elements of the form
\[  (a-1)(b-1) = ab -a -b + 1, \quad a, b \in A \,.
\]
We compute their length:
\begin{lemma}\label{l:normprodd2a}
  Let $a\neq 1 \neq b \in A$.
  Then
  \[ \norm{(a-1)(b-1)}^2 = 
      \begin{cases}
        8, \quad & \text{if } a=b=a^{-1}, \\
        6,       & \text{if $a=b$ or $a=b^{-1}$, but $a^2\neq1$,}
        \\
        4  & \text{else.} 
      \end{cases}
  \]
\end{lemma}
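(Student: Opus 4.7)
The plan is purely computational: expand $(a-1)(b-1) = ab - a - b + 1$ and note that $\norm{\cdot}^2$ on $\reals A$ is the sum of squares of coefficients in the standard basis $A$. So the answer is determined entirely by which of the four group elements $1, a, b, ab$ collide (and with what signs their coefficients combine).

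First I would record the easy non-collisions: $a\neq 1$ forces $ab\neq b$, and $b\neq 1$ forces $ab\neq a$. Also $a\neq 1 \neq b$ gives $a\neq 1$ and $b\neq 1$ automatically. So the only possible coincidences among $\{1,a,b,ab\}$ are $a=b$, $ab=1$ (equivalently $b=a^{-1}$), or both simultaneously (which forces $a=b=a^{-1}$, i.e.\ $a^2=1$ and $a=b$).

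Now I would carry out the case split:
\begin{itemize}
\item If $a=b=a^{-1}$, then $ab=a^2=1$, and $(a-1)(b-1) = 2 - 2a$ with $a\neq 1$; hence $\norm{(a-1)(b-1)}^2 = 4+4 = 8$.
\item If $a=b$ but $a^2\neq 1$, then $a^2, a, 1$ are three distinct elements and $(a-1)(b-1) = a^2 - 2a + 1$, giving $\norm{\cdot}^2 = 1+4+1 = 6$.
\item If $a=b^{-1}$ but $a^2\neq 1$, then $ab=1$ and $a\neq b$, so $(a-1)(b-1) = 2 - a - b$ with $1, a, b$ distinct, giving $\norm{\cdot}^2 = 4+1+1 = 6$.
\item Otherwise $a\neq b$ and $ab\neq 1$, so all four of $1, a, b, ab$ are distinct and $\norm{(a-1)(b-1)}^2 = 1+1+1+1 = 4$.
\end{itemize}

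There is no real obstacle: the only thing to be careful about is verifying that the four cases really are exhaustive and that the middle two cases are disjoint (both of which follow from the observation above that simultaneous collisions $a=b$ and $ab=1$ force $a^2=1$, i.e.\ land us in the first case). The result then follows by reading off the three numerical values.
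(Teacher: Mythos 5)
Your proof is correct and follows essentially the same route as the paper: the key observation in both is that $a\neq 1\neq b$ rules out $ab=a$ and $ab=b$, so the only possible coincidences among $1, a, b, ab$ are $a=b$ and/or $ab=1$, after which the three values $8$, $6$, $4$ follow by counting coefficients. The paper merely leaves the final case-by-case computation implicit, whereas you write it out; there is no substantive difference.
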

\begin{proof}
  Since $a\neq 1 \neq b$, we also have $a\neq ab\neq b$.
  Thus the only equalities that can occur among
  $1$, $a$, $b$, $ab$ are
  $a=b$ and/or $ab=1$.
\end{proof}

From the equality $\latt(A) = \Ker \psi_A \cap \Delta A$, 
it is immediate that every nonzero element 
in $\latt(A)$ has length at least 
$\sqrt{1^2+1^2+1^2+1^2}=2$, and this length is attained 
exactly at the elements of the form
\[ x + y - r-s \in \ints A
   \quad \text{such that }
   x,\, y,\, r,\, s \in A
   \text{ are distinct and } xy = rs.
\]
We want to parametrize the vectors of length~$2$ in a different way:
\begin{lemma}\label{l:minLA_param}
  Write
  \[ \Omega(A):= 
     \menge{(a,b)\in A\times A}{a\neq 1 \neq b \neq a^{\pm 1}}.  
  \]
  Define $f\colon \Omega(A)\times A \to \latt(A)$ by
  \[ f(a,b,g):= (a-1)(b-1)g,
     \quad (a,b)\in \Omega(A), g\in A.
  \]
  Then $f$ defines a $4$-to-$1$ map from $\Omega(A)\times A$
  onto the set
  \[ \menge{v\in \latt(A)}{\norm{v}=2}.
  \]  
\end{lemma}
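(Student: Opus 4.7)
The plan is to split the lemma into three parts: (1) the image of $f$ consists of norm-$2$ lattice vectors, (2) $f$ is surjective onto that set, and (3) every fibre has exactly four elements.

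For part (1), I would take $(a,b,g)\in\Omega(A)\times A$ and expand
\[
  f(a,b,g)=(a-1)(b-1)g = abg + g - ag - bg.
\]
The defining conditions $a\neq 1\neq b$ and $a\neq b^{\pm1}$ are exactly what is needed to make the four group elements $g$, $ag$, $bg$, $abg$ pairwise distinct (this is the translated version of the ``else'' case of \cref{l:normprodd2a}, using that multiplication by $g$ is a bijection on $A$). So $\norm{f(a,b,g)}^{2}=4$, and \cref{l:latt_augsquare} places $f(a,b,g)$ in $\latt(A)=(\Delta A)^{2}$.

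For part (2), I would use the characterisation stated just before the lemma: every $v\in\latt(A)$ of length~$2$ has the form $v=x+y-r-s$ with $x,y,r,s$ pairwise distinct in $A$ and $xy=rs$. Setting $g:=x$, $a:=rx^{-1}$, $b:=sx^{-1}$, a direct computation gives
\[
  f(a,b,g) = abg + g - ag - bg = \frac{rs}{x} + x - r - s = y + x - r - s = v,
\]
where $rs/x = xy/x = y$. The pairwise distinctness of $x,y,r,s$ immediately gives $a\neq 1\neq b$ and $a\neq b$, while $ab = yx^{-1}\neq 1$ (because $y\neq x$), so $(a,b)\in\Omega(A)$.

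For part (3), I would fix a norm-$2$ vector $v=x+y-r-s$ and enumerate its preimages. For any $(a,b,g)\in\Omega(A)\times A$ with $f(a,b,g)=v$, the two $+1$-coefficient elements of $f(a,b,g)$ are $g$ and $abg$, so $\{g,abg\}=\{x,y\}$; then $\{ag,bg\}=\{r,s\}$ is automatic. This gives two choices for $g$ (either $x$ or $y$) and two choices for whether $ag=r$ or $ag=s$, for a total of four preimages. Each of the four resulting triples lies in $\Omega(A)\times A$ by exactly the same verification as in part~(2), and they are pairwise distinct because the triple $(a,b,g)$ determines $g$, $ag$, and $bg$ as distinct group elements. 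Hence every fibre has size exactly~$4$.

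The main obstacle I anticipate is just keeping the bookkeeping of the four preimages clean; the underlying calculation is routine, and the real content is already in the characterisation of norm-$2$ elements of $\latt(A)$ and in the identity $(a-1)(b-1)g = abg + g - ag - bg$.
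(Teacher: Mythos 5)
Your proposal is correct and takes essentially the same approach as the paper: both match the coefficients of $(a-1)(b-1)g = abg+g-ag-bg$ against $v=x+y-r-s$ to force $\{g,abg\}=\{x,y\}$ and $\{ag,bg\}=\{r,s\}$, and then observe that all four resulting triples solve the equation precisely because $xy=rs$. The only cosmetic difference is that you verify $(a,b)\in\Omega(A)$ directly from the distinctness of $x$, $y$, $r$, $s$, whereas the paper deduces it from $\norm{v}=2$ via \cref{l:normprodd2a}.
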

\begin{proof}
  It is clear that
  $\norm{(a-1)(b-1)g} = \norm{(a-1)(b-1)}=2$
  when $(a,b)\in \Omega$.
  Conversely, suppose $v\in \latt(A)$ has length~$2$.
  Then $v = x+y-r-s$ with distinct
  $x$, $y$, $r$, $s\in A$ such that $xy=rs$.
  The equation $v=(a-1)(b-1)g$ yields
  \[ g\in \{x,y\} \quad \text{and}\quad
     \{ag,bg\} = \{r,s\}.
  \]
  There are obviously four triples $(a,b,g)$ that fulfill these 
  conditions, and in view of $xy=rs$, every such triple
  solves $v=(a-1)(b-1)g$. 
  As $\norm{v}=2$, we must have $(a,b)\in \Omega(A)$. 
\end{proof}

\begin{cor}\label{c:LAkissnumb}
  Let $A$ be an abelian group
  and let $t$ be the order of the subgroup
  $\menge{a\in A}{a^2 = 1}\leq A$.
  Then 
  \[ \abs[\big]{\menge{v\in\latt(A)}{\norm{v}=2}}
     = \frac{1}{4}\abs{A} \big[ (\abs{A}-1)(\abs{A}-3) + t-1 \big].
  \]
\end{cor}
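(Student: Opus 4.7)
The plan is to apply \cref{l:minLA_param} directly: since $f\colon \Omega(A)\times A \to \{v\in\latt(A): \norm{v}=2\}$ is a surjective $4$-to-$1$ map, the cardinality of the set of minimal vectors equals $\tfrac{1}{4}\abs{\Omega(A)}\cdot\abs{A}$. The entire proof then reduces to the elementary combinatorial task of counting $\abs{\Omega(A)}$, followed by an algebraic simplification to match the stated formula.

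To count $\abs{\Omega(A)}$, I would fix $a\in A\setminus\{1\}$ and count the allowable $b$. The forbidden set for $b$ is $\{1,a,a^{-1}\}$, whose size depends on whether $a^2=1$. I would split the $\abs{A}-1$ choices of $a$ into two cases: the $t-1$ non-identity involutions (for which the forbidden set has $2$ elements, giving $\abs{A}-2$ choices for $b$), and the remaining $\abs{A}-t$ elements (for which the forbidden set has $3$ elements, giving $\abs{A}-3$ choices). This yields
\[
  \abs{\Omega(A)} = (t-1)(\abs{A}-2) + (\abs{A}-t)(\abs{A}-3).
\]

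The last step is a routine expansion. Writing $n=\abs{A}$, I would expand both products and collect, obtaining
\[
  \abs{\Omega(A)} = n^2 - 4n + 3 + (t-1) = (n-1)(n-3) + (t-1).
\]
Multiplying by $\abs{A}/4$ gives exactly the claimed formula. There is no real obstacle — the only thing to be careful about is correctly handling the case $a^2=1$ (where $a$ and $a^{-1}$ coincide, so only one element is removed in addition to $1$), which is why the $t-1$ involutions contribute a different count. The assumption $\abs{A}\geq 3$ is implicit, since otherwise $\Omega(A)$ is empty and $\latt(A)$ itself is trivial.
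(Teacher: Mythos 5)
Your proof is correct and follows the same route as the paper: the paper's own proof consists precisely of invoking the $4$-to-$1$ parametrization of \cref{l:minLA_param} and stating $\abs{\Omega(A)}=(\abs{A}-1)(\abs{A}-3)+t-1$, which you verify in detail by splitting the count over involutions and non-involutions. (Your closing caveat is unnecessary: the formula and your count remain valid for $\abs{A}\leq 3$, where both sides vanish, and note that $\latt(C_2)$ and $\latt(C_3)$ are in fact nontrivial lattices, just with minimum distance larger than $2$.)
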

\begin{proof}
  The size of $\Omega(A)$ is
  $\abs{\Omega(A)}= (\abs{A}-1)(\abs{A}-3) + t-1 $.
\end{proof}

Together with \cref{l:normprodd2a},
this yields:
\begin{cor}
  The minimum distance of $\latt(A)$ is
  $\sqrt{8}$ for $\abs{A}=2$,
  is $\sqrt{6}$ for $\abs{A}=3$
  and is equal to $\sqrt{4}=2$ 
  for $\abs{A}\geq 4$.
\end{cor}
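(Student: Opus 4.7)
The plan is to split into three regimes $\abs{A}\geq 4$, $\abs{A}=2$, and $\abs{A}=3$, and in each to combine the general lower bound $\norm{v}\geq 2$ for $0\neq v\in\latt(A)$ (stated in the paragraph just before \cref{l:minLA_param}) with \cref{l:normprodd2a,l:minLA_param}.

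For $\abs{A}\geq 4$, I would fix any $a\in A\setminus\{1\}$ and observe that the ``forbidden'' set $\{1,a,a^{-1}\}$ has at most three elements, so $\abs{A}\geq 4$ forces a valid $b$ with $(a,b)\in\Omega(A)$. Then \cref{l:minLA_param} produces the element $(a-1)(b-1)\in\latt(A)$ of length $2$, matching the general lower bound; hence $\mindis(\latt(A))=2$.

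For $\abs{A}=2$, with $A=\{1,a\}$ and $a^2=1$, the augmentation ideal $\Delta A=\ints(a-1)$ is of rank one. By \cref{l:latt_augsquare} the lattice $\latt(A)=(\Delta A)^2$ is therefore generated as a $\ints$-module by $(a-1)^2=2-2a$, and every nonzero element is an integer multiple of this generator of norm $\sqrt{8}$.

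For $\abs{A}=3$, I write $A=\langle a\rangle$ with $a^3=1$. The element $(a-1)^2$ belongs to $(\Delta A)^2=\latt(A)$, and \cref{l:normprodd2a} gives it squared norm $6$; so $\mindis(\latt(A))\leq\sqrt{6}$. For the matching lower bound, \cref{l:minLA_param} rules out length $2$ (because $\Omega(A)$ is empty when $\abs{A}=3$), and the only integer squared length strictly between $4$ and $6$ is $5$, which I rule out by a brief arithmetic check: the only ways to write $5$ as a sum of three integer squares force coordinates of the form $(0,\pm 1,\pm 2)$ up to permutation, whose coordinate sum is never zero. This last small case analysis is the only step of any substance; everything else feeds in almost directly from the preceding lemmas.
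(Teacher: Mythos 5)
Your proof is correct and takes essentially the same route as the paper's, which obtains the corollary by combining the lower bound $\norm{v}\geq 2$ with \cref{c:LAkissnumb} (equivalently, whether $\Omega(A)$ is empty or not) and the generator lengths from \cref{l:normprodd2a}. The extra details you supply for the small cases --- the rank-one argument for $\abs{A}=2$ and the parity exclusion of squared length $5$ for $\abs{A}=3$ --- are exactly the steps the paper leaves implicit, and they check out.
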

(This result is also contained in the paper 
by Böttcher et al.~\cite{BoettcherFGM15}
as part of their Theorem~1.1.)

\begin{remark}
   The equality $\latt(A) =  (\Delta A)^2$
   suggests two natural generalizations,
   which can of course be combined.
   First, one may study the lattice theoretic properties
   of $(\Delta G)^2$ for a nonabelian group.
   Notice, however, that for a nonabelian group,
   the minimal distance of $(\Delta G)^2$ is
   $\sqrt{2}$: 
   Namely, if $gh\neq hg$, then
   \[ (g-1)(h-1)-(h-1)(g-1) = gh -hg \in (\Delta G)^2
   \]
   has length $\sqrt{2}$.
   
   Second, one may study higher powers $(\Delta A)^r$.
   When $A=C_n$ is cyclic of order $n$, 
   then $(\Delta C_n)^r$ is isometric to the
   \emph{Craig lattice}~$A_{n-1}^{(r)}$
   \cite[Prop.~5.4.5]{Martinet03PLES}.
   When $n=p$ is prime, then it is known that
   $\mindis (\Delta C_p)^r 
    \geq \sqrt{2r}$ for $r < (p-1)/2$,
   and equality holds when $r$ is a proper divisor of $p-1$
   \cite[Theorem~5.4.8]{Martinet03PLES}.
   Examples are known where the inequality is strict.
   When $A$ is not cyclic of prime order, 
   then in general we may have
   $\mindis (\Delta A)^r < \sqrt{2r}$.
   For example, $\mindis(\Delta C_6)^3= \sqrt{4}=2$.
\end{remark}

\section{Basis of minimal vectors}
\label{sec:basmin}

One of the main results of Böttcher et al.~\cite{BoettcherFGM15}
is that $\latt(A)=(\Delta A)^2$ has a lattice basis of elements of 
minimal length, except when 
$A$ is cyclic of order~$4$.
We give an alternative proof of this result in this section.

It is elementary to see that 
$\latt(C_2)$ and $\latt(C_3)$ have a basis 
of minimal vectors,
and that $\latt(C_4)$ has no such basis:
for $C_4$, \cref{c:LAkissnumb} yields
that $\latt(C_4)$ has only four vectors 
of minimal length~$2$, 
and these come in pairs $\pm v$.

We begin our proof with a (well-known) lemma:

\begin{lemma}\label{l:cycbas}
  Let $A= \erz{a}$ be cyclic of order $n$.
  Then the ideal $\Delta A = (a-1)\ints A$ is principal,
  and the following set is a $\ints $-basis of $(\Delta A)^2$:
  \begin{multline*}
   \menge{(a-1)(b-1)}{ 1\neq b\in A} \\
   {}=
   \{\, (a-1)(a-1),\, (a-1)(a^2-1),\,
           \dotsc ,
           (a-1)(a^{n-1}-1) \,\}.
  \end{multline*}
\end{lemma}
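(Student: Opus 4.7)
The plan is to proceed in two steps, first establishing that $\Delta A$ is principal generated by $a-1$, and then deducing the basis statement by examining the multiplication-by-$(a-1)$ map on $\Delta A$.

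For principality, I would use the standard geometric-series identity
\[
  a^i - 1 = (a-1)(1 + a + a^2 + \dots + a^{i-1}).
\]
Since the elements $a^i - 1$ with $1 \leq i \leq n-1$ form a $\ints$-basis of $\Delta A$, this shows that every element of $\Delta A$ lies in $(a-1)\ints A$; the reverse inclusion is trivial because $a-1 \in \Delta A$ and $\Delta A$ is an ideal. Hence $\Delta A = (a-1)\ints A$, and consequently $(\Delta A)^2 = (a-1)\Delta A$.

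For the basis claim, apply the multiplication map $\mu\colon \Delta A \to (\Delta A)^2$, $\mu(x) = (a-1)x$. Since $\{a^i - 1 : 1 \leq i \leq n-1\}$ is a $\ints$-basis of $\Delta A$, its image under $\mu$ spans $(\Delta A)^2$. It remains to show that $\mu$ is injective; the image of a basis under an injective $\ints$-linear map is again a $\ints$-basis.

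The key step, and the main (though mild) obstacle, is to identify the kernel of multiplication by $a-1$ on the ambient ring $\ints A$. Let $N := 1 + a + a^2 + \dots + a^{n-1}$ be the norm element. Then $(a-1)N = a^n - 1 = 0$, so $\ints N \subseteq \Ker \mu$ (extended to $\ints A$). Conversely, if $(a-1)x = 0$, then $ax = x$, which forces $x$ to have equal coefficients at $1, a, \dots, a^{n-1}$, i.e.\ $x \in \ints N$. Hence the kernel of multiplication by $a-1$ on $\ints A$ is exactly $\ints N$. Because $N$ has augmentation $n \neq 0$, we have $\ints N \cap \Delta A = 0$, so $\mu$ is injective on $\Delta A$. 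This yields the desired basis.
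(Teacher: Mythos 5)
Your proof is correct and takes essentially the same approach as the paper: the geometric-series identity yields $\Delta A=(a-1)\ints A$, hence $(\Delta A)^2=(a-1)\Delta A$, and the claimed basis is the image of the standard basis $\{\,a^k-1 : 1\leq k\leq n-1\,\}$ of $\Delta A$ under multiplication by $a-1$. The only difference is that the paper leaves the linear independence implicit (a spanning set of $n-1$ elements in a lattice of rank $n-1$ is automatically a basis), whereas you verify it explicitly by showing that the kernel of multiplication by $a-1$ on $\ints A$ is $\ints N$ for the norm element $N=1+a+\dotsb+a^{n-1}$, which meets $\Delta A$ trivially; this extra step is sound and makes the argument self-contained.
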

\begin{proof}
  We have the identity
  \[ a^k - 1 = (a-1)(a^{k-1} + \dotsb + a + 1).
  \]
  As the elements $a^k-1$ for $k= 1$, $\dotsc$, $n-1$
  form a basis of $\Delta A$, we see that
  $\Delta A = (a-1)\ints A$,
  and that the displayed set forms a basis of 
  $(\Delta A)^2=(a-1)\Delta A$.
\end{proof}

Notice that all but two elements in the basis
    from Lemma~\ref{l:cycbas} have squared
    length~$4$.
    The exceptions are the first 
    and the last element, 
    namely $(a-1)^2$ and $(a-1)(a^{-1}-1)$.
    We will see below how to replace these by shorter vectors
    when $n\geq 5$.

\begin{lemma}\label{l:dirprodbas}
  Let $A=B\times C$ be a direct product of 
  finite, abelian groups.
  Suppose that $D$ is a basis of $(\Delta B)^2$ and 
  $E$ is a basis of $(\Delta C)^2$.
  Define
  \[  M(B,C):= \menge{(b-1)(c-1)}{ b\in B\setminus\{1\},\, 
                           c\in C\setminus\{1\} }.
  \]
  Then $ D\cup E \cup M(B,C)$
  is a basis of $(\Delta A)^2$.
\end{lemma}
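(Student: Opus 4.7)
The plan is to exploit the ring isomorphism $\ints A \iso \ints B \otimes_\ints \ints C$ induced by the factorization $A=B\times C$, combined with the $\ints$-module decompositions $\ints B = \ints\cdot 1 \oplus \Delta B$ and $\ints C = \ints\cdot 1 \oplus \Delta C$. This yields the internal direct sum of $\ints$-modules
\[
  \ints A \;=\; \ints\cdot 1 \;\oplus\; \Delta B \;\oplus\; \Delta C \;\oplus\; (\Delta B \cdot \Delta C),
\]
where $\Delta B \cdot \Delta C$ denotes the $\ints$-submodule of $\ints A$ spanned by products $(b-1)(c-1)$ with $b\in B$, $c\in C$. Concretely, the identity $bc = 1 + (b-1) + (c-1) + (b-1)(c-1)$ shows that transitioning from the natural basis $\{bc\}_{b\in B, c\in C}$ to $\{1\} \cup \{b-1\}_{b\neq 1}\cup\{c-1\}_{c\neq 1}\cup M(B,C)$ is triangular with unit diagonal, so both are $\ints$-bases of $\ints A$. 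In particular $M(B,C)$ is a $\ints$-basis of $\Delta B\cdot\Delta C$, and $\Delta A = \Delta B \oplus \Delta C \oplus \Delta B\cdot\Delta C$ as $\ints$-modules.

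Next I would identify $(\Delta A)^2$ within this decomposition. The inclusion $(\Delta B)^2 + (\Delta C)^2 + \Delta B\cdot\Delta C \subseteq (\Delta A)^2$ is immediate from $\Delta B, \Delta C\subseteq \Delta A$. For the reverse inclusion, I expand $\Delta A\cdot\Delta A$ using $\Delta A = \Delta B + \Delta C + \Delta B\cdot\Delta C$: the nine resulting pairwise products fall into one of the three target summands; for instance $(\Delta B)\cdot(\Delta B\cdot\Delta C) = (\Delta B)^2\cdot\Delta C \subseteq \Delta B\cdot\Delta C$ because $(\Delta B)^2\subseteq \Delta B$, and symmetrically for the other mixed products. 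Hence
\[
  (\Delta A)^2 \;=\; (\Delta B)^2 + (\Delta C)^2 + \Delta B\cdot\Delta C.
\]

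Finally, this sum is in fact direct, because its three summands lie in the three distinct direct summands $\Delta B$, $\Delta C$, $\Delta B\cdot\Delta C$ of $\ints A$. Therefore
\[
  (\Delta A)^2 \;=\; (\Delta B)^2 \oplus (\Delta C)^2 \oplus \Delta B\cdot\Delta C,
\]
and concatenating bases of the three summands gives a $\ints$-basis of $(\Delta A)^2$. Since $D$, $E$, and $M(B,C)$ are such bases, $D\cup E\cup M(B,C)$ is a basis of $(\Delta A)^2$, as required.

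The main obstacle is the direct sum decomposition of $\ints A$; everything else is a routine consequence. One can verify the cardinalities match as a consistency check: $\abs{D}+\abs{E}+\abs{M(B,C)} = (\abs{B}-1)+(\abs{C}-1)+(\abs{B}-1)(\abs{C}-1) = \abs{A}-1$, the known rank of $\latt(A)$.
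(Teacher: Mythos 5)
Your proof is correct, and it takes a genuinely different route from the one in the paper. The paper's proof has two steps: a counting step, observing that $D\cup E\cup M(B,C)$ has exactly $(\abs{B}-1)+(\abs{C}-1)+(\abs{B}-1)(\abs{C}-1)=\abs{A}-1$ elements, which is the rank of $(\Delta A)^2$, so that only spanning needs to be checked; and a spanning step, which applies to each generator $(b_1c_1-1)(b_2c_2-1)$ of $(\Delta A)^2$ the directly verifiable identity
\begin{align*}
  (b_1c_1-1)(b_2c_2-1)
  &= (b_1b_2-1)(c_1c_2-1)\\
  &\quad - (b_1-1)(c_1-1)-(b_2-1)(c_2-1)\\
  &\quad + (b_1-1)(b_2-1)+(c_1-1)(c_2-1),
\end{align*}
whose right-hand side visibly lies in the span of $D\cup E\cup M(B,C)$. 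You avoid the counting step altogether: from the unitriangular change of basis you obtain the internal decomposition $\ints A=\ints\cdot 1\oplus\Delta B\oplus\Delta C\oplus\Delta B\cdot\Delta C$, and then you identify $(\Delta A)^2=(\Delta B)^2\oplus(\Delta C)^2\oplus\Delta B\cdot\Delta C$, so the linear independence of $D\cup E\cup M(B,C)$ is structural rather than inferred from cardinality. Your argument therefore needs neither the rank of $(\Delta A)^2$ nor the fact that a generating set of the correct size of a free $\ints$-module is a basis, and it proves slightly more than the lemma asserts (the direct-sum decomposition of $(\Delta A)^2$, with $M(B,C)$ a basis of the summand $\Delta B\cdot\Delta C$). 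What the paper's proof buys instead is brevity and minimal machinery: one count plus one five-term identity, with no tensor-product or change-of-basis setup. At bottom the computational content is comparable, since both your expansion of $(\Delta B+\Delta C+\Delta B\cdot\Delta C)^2$ and the paper's identity ultimately rest on the relation $bc-1=(b-1)+(c-1)+(b-1)(c-1)$.
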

\begin{proof}
  The set $ D\cup E \cup M(B,C)$
  has the right cardinality for a lattice basis. 
  Thus it suffices to show that every generator of 
  $(\Delta A)^2 = (\Delta[B\times C])^2$ is in the $\ints$-span
  of $ D\cup E \cup M(B,C)$.
  We know that $(\Delta [B\times C])^2$ is spanned by elements of the
  form
  \[ (b_1c_1-1)(b_2c_2-1), 
     \quad b_1, b_2\in B, c_1, c_2\in C.
  \]
  We have the following identity 
  (which can be verified by direct computation):  
  \begin{align*}
    (b_1c_1-1)(b_2c_2-1) &=
       (b_1b_2-1)(c_1c_2-1) \\
       & \qquad \quad
       {} - (b_1-1)(c_1-1) - (b_2-1)(c_2-1)
       \\
       & \qquad \quad 
       {} + (b_1-1)(b_2-1) + (c_1-1)(c_2-1).
  \end{align*}  
  Here, the first three summands are in
  $M(B,C)$ (up to sign),
  and the last two are in the span of $D$ 
  or $E$, respectively.
  Thus the lemma follows.
\end{proof}

\begin{thm}
   Let $A$ be a finite abelian group.
   \begin{enumthm}
   \item $(\Delta A)^2$ has a lattice basis of elements of the
       form $(a-1)(b-1)$ with $a$, $b\in A$.
   \item \cite[Theorem~1.2]{BoettcherFGM15}
         If either $\abs{A} >4$ or $A\iso C_2\times C_2$, 
         then
         $(\Delta A)^2$ has a lattice basis of elements of
        length~$2$.
   \end{enumthm}       
\end{thm}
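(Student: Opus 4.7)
My plan for~(i) is to induct on the number of cyclic direct factors in the invariant-factor decomposition of~$A$. The base case $A = C_n$ is Lemma~\ref{l:cycbas} itself, which gives a basis $\{(a-1)(a^k-1) : 1 \leq k \leq n-1\}$ of the required form. For the inductive step, write $A = B \times C$ with $C$ cyclic and $B$ of smaller rank, apply the induction hypothesis to~$B$ and Lemma~\ref{l:cycbas} to~$C$, and combine using Lemma~\ref{l:dirprodbas}; all three parts ($D$, $E$, and $M(B,C)$) of the resulting basis consist of elements of the form $(x-1)(y-1)$.

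For~(ii), I would begin from the basis supplied by~(i) and perform unimodular column operations to replace the ``long'' vectors (those of length $\sqrt 6$ or $\sqrt 8$) by length-$2$ ones. For cyclic $A = C_n = \erz{a}$ with $n \geq 5$, write $\epsilon_k := (a-1)(a^k - 1)$ and note the identity $\epsilon_j \cdot a^m = \epsilon_{j+m} - \epsilon_m$ (indices mod~$n$, with $\epsilon_n := 0$), which follows directly from $(a-1)(a^j-1)a^m = (a-1)(a^{j+m} - a^m)$. Specialising to $j = 2$ and $m \in \{1, n-1\}$ produces two length-$2$ vectors
\[u := (a-1)(a^2-1) \cdot a = \epsilon_3 - \epsilon_1, \qquad v := (a-1)(a^2-1) \cdot a^{n-1} = \epsilon_1 - \epsilon_{n-1}.\]
I would then show that $\{u, \epsilon_2, \ldots, \epsilon_{n-2}, v\}$ is a $\ints$-basis of $(\Delta A)^2$ by expanding the corresponding change-of-basis matrix along its $n-3$ middle columns (each a standard basis vector), which reduces the determinant to that of the $2 \times 2$ minor with rows $(-1, 1)$ and $(0, -1)$, namely~$1$. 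The construction degenerates precisely at $n = 4$, where $\epsilon_{n-1} = \epsilon_3$ forces $v = -u$; this accounts for the known $C_4$ exception.

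For non-cyclic $A$ (which includes $C_2 \times C_2$), decompose $A = B \times C$ with both factors nontrivial and start from the basis $D \cup E \cup M(B,C)$ from~(i). Every element of $M(B,C)$ already has length~$2$, since $b \in B \setminus \{1\}$ and $c \in C \setminus \{1\}$ lie in different direct factors, so $(b, c) \in \Omega(A)$. The long vectors in~$D$ are all of the form $(x-1)(y-1)$ with $x \in B$ and $y \in \{x, x^{-1}\}$. For each such vector, fix some $c \in C \setminus \{1\}$ and use the identity
\[(x-1)(yc-1) = (x-1)(y-1) + (xy-1)(c-1) - (y-1)(c-1),\]
which follows from $yc - y = y(c-1)$ together with $y(x-1) = (xy-1) - (y-1)$. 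Both correction terms lie in $M(B,C)$ (with the first vanishing when $xy = 1$), so replacing $(x-1)(y-1)$ by the length-$2$ vector $(x-1)(yc-1)$ is a unimodular column operation; that $(x-1)(yc-1)$ has length~$2$ follows from $(x, yc) \in \Omega(A)$, since the nontrivial $C$-component of $yc$ forces $yc \neq x^{\pm 1}$ and $yc \neq 1$. Handle the long vectors in~$E$ symmetrically using some $b \in B \setminus \{1\}$.

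The step I expect to be most delicate is verifying unimodularity of the composition of all these column operations at once, and checking that no accidental dependence is introduced. The key observation should be that each replacement only alters one ``long'' column by adding $\ints$-multiples of $M(B,C)$ columns, which themselves are never touched during the procedure; consequently the composite transformation is unimodular by construction.
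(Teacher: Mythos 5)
Your proof is correct, and its skeleton coincides with the paper's: part (i) is exactly the paper's argument (Lemma~\ref{l:cycbas} for cyclic factors, then repeated application of Lemma~\ref{l:dirprodbas}), and part (ii) follows the same strategy of trading the long vectors $(a-1)(a^{\pm 1}-1)$ for length-$2$ vectors by unimodular column operations. The differences are in how the replacements are organized. In the cyclic case the paper replaces $(a-1)^2$ by $(a-1)(a-b)$ and $(a-1)(a^{-1}-1)$ by $(a^{-1}-1)(ab-1)$, choosing $b$ (e.g.\ $b=a^3$) so that the correction term $(a-1)(b-1)$ is itself a basis vector; note that $(a-1)(a-a^3)=\epsilon_1-\epsilon_3=-u$, so your first replacement is the paper's up to sign, and your explicit $2\times 2$ determinant does the work that the paper's ``subtract an existing basis column'' (transvection) argument does implicitly. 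For non-cyclic $A$ the paper keeps the full decomposition into $r$ cyclic factors and borrows the auxiliary element $b$ from an adjacent factor $\erz{a_{i\pm 1}}$, whereas you split $A=B\times C$ once and use the identity $(x-1)(yc-1)=(x-1)(y-1)+(xy-1)(c-1)-(y-1)(c-1)$, whose correction terms lie in $M(B,C)\cup\{0\}$; this gives a uniform treatment of all non-cyclic groups, including $C_2\times C_2$, at the cost of running the cyclic case separately, while the paper's choice avoids that case split but requires a slightly fussier selection of $b$ (for instance, for $\abs{\erz{a}}=5$ the second replacement needs $b=a^2$ rather than $a^3$, since $a^3=a^{-2}$ there). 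Finally, the unimodularity worry in your last paragraph is resolved exactly as you say: each operation adds integer multiples of $M(B,C)$-columns, which are never modified, so the composite transformation is a product of elementary matrices; since unimodular operations preserve the property of being a basis, no accidental dependence or coincidence of columns can occur.
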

\begin{proof}
  Write 
  \[ A = \erz{a_1} \times \dotsm \times \erz{a_r}
  \]
  as a direct product of cyclic groups.
  By Lemma~\ref{l:cycbas},
  $(\Delta\erz{a_i})^2$ has a basis $B_i$ containing elements
  of the form 
  $(a_i-1)(a_i^k-1)$.
  Using Lemma~\ref{l:dirprodbas}, we can extend
  $B_1\cup B_2$ to a basis $B_{1,2}$ of $(\Delta\erz{a_1,a_2})^2$.
  Using Lemma~\ref{l:dirprodbas} again, we can extend 
  $B_{1,2}\cup B_3$ to a basis $B_{1,2,3}$
  of $(\Delta\erz{a_1,a_2,a_3})^2$.
  Repeating, we arrive at a basis of $(\Delta A)^2$
  containing only elements of the form $(a-1)(b-1)$.
  This shows the first part.
  
  To see the second part of the theorem, observe first that
  the set $M(B,C)$ defined in Lemma~\ref{l:dirprodbas}
  contains only elements of squared length $4$.
  However, each $B_i$ contains elements
  of larger squared length, namely
  $(a_i-1)^2$ and $(a_i-1)(a_i^{-1}-1)$
  (possibly equal).
  We need to replace these by smaller elements.
  
  For the moment, write $a=a_i$.
  Choose an element $b\in A$ such that
  $b\notin\{1, a^{\pm 1}, a^2\}$.
  This is possible since we assume that 
  $\abs{A} > 4$ or $A \iso C_2 \times C_2$.
  We have
  \[ (a-1)^2 = (a-1)(a-b) + (a-1)(b-1).
  \]
  The assumption on $b$ assures that both
  $(a-1)(a-b)$ and $(a-1)(b-1)$ have squared length~$4$.
  Moreover, we can choose  $b$ such that $(a-1)(b-1)$ is
  already part of our basis constructed in the first step:
  Namely, when $\abs{\erz{a}} \geq 5$, we can take
  $b=a^3$.
  Otherwise, $A$ is not cyclic, and we can choose 
  $b$ in either $\erz{a_{i+1}}$ or $\erz{a_{i-1}}$.
  (Recall that $a=a_i$.)
  Thus we can replace $(a-1)^2$ by the element
  $(a-1)(b-a)$ and still have a basis.
  
  Similarly, we have
  \[  (a-1)(a^{-1}-1) 
     = (a^{-1}-1)(ab-1) + (a-1)(b-1).
  \] 
  Here we choose $b$ such that 
  $b\notin\{1,a^{\pm 1}, a^{-2}\}$
  and $(a-1)(b-1)$ is part of our basis already constructed.
  Then we can replace $(a-1)(a^{-1}-1)$ by
  $(a^{-1}-1)(ab-1)$.
  This finishes the proof.
\end{proof}

\begin{remark}
    We can even fulfill both conditions of the theorem
    simultaneously:
    of course,
    the element $(a-1)(a-b)$ in the above proof is not of the form
    $(x-1)(y-1)$ with $x$, $y\in A$.
    But in view of the equations
    \begin{align*} 
     (a-1)^2 &= (a-1)(a^{-1}-1) - (a^{-1}-1)(a^2-1) \\
        &= (a-1)(b-1) - (a^2-1)(b-1) + (a-1)(ab-1),
    \end{align*}
    we can also replace $(a-1)^2$ by 
    $(a^{-1}-1)(a^2-1)$ when $\abs{\erz{a}}\geq 4$,
    or by an element of the form $(a-1)(ab-1)$ when
    $A$ is not cyclic.
    
    In particular, for $A=\erz{a}$ cyclic of order
    $n \geq 5$, we have the following basis
    of minimal elements:
    \begin{multline*}
       \{\,(a-1)(a^2-1),\, (a-1)(a^3-1),\, \dotsc, 
           (a-1)(a^{n-2}-1) \,
       \}
       \\
       \cup \{\, (a^{-1}-1)(a^2-1),\, (a^{-1}-1)(a^3-1) \, \}.
    \end{multline*}
    This basis of $\latt(A)=(\Delta A)^2$ 
    (up to an automorphism) 
    was given by M.~Sha~\cite[Theorem~3.2]{Sha15}, 
    with an entirely different proof.
\end{remark}
\begin{remark}
  Another way of constructing a basis of minimal vectors
  goes as follows:
  Suppose that $A= \erz{a} = \erz{b}$. 
  Then by \cref{l:cycbas},
  $(\Delta A)^2 = (a-1)(b-1)\ints A$, 
  and thus 
  \[  \menge{(a-1)(b-1) a^k}{k=0, 1, \dotsc, n-2}
  \]
  is a basis of $(\Delta A)^2$, 
  where $n=\abs{A}$ as before.
  When $\phi(n) > 2$
  (that is, $n  \notin \{ 1, 2, 3, 4, 6\}$),
  then we can choose $b\neq a^{\pm 1}$,
  so that all elements in this basis have squared length~$4$.
  (When $n$ is odd, we can choose $b=a^2$,
  and get the basis given by
  Martinet~\cite[Prop.~5.3.5]{Martinet03PLES}.)
  
  The basis of this remark is contained in a single $A$-orbit,
  while the basis of the previous remark
  contains elements from $n-3$ different $A$-orbits. 
\end{remark}


\section{Eutaxy}
\label{sec:eutaxy}

\begin{defi}\label{def:eutacticset}
    Let $(V, \eskp)$ be a Euclidean space.
    For $s\in V$, let 
    $\pi_s\colon V\to V$ be the linear map defined by
    \[  v\pi_s = \skp{v}{s}s.
    \]
    A finite subset $S\subset V$ is called
    \defemph{eutactic} (in $V$), 
    when there are
    positive scalars $r_s >0$ such that
    \[ \id_V = \sum_{s\in S} r_s \pi_s.    
    \]
    Equivalently, for all $v\in V$ we have
    \[ v = \sum_{s\in S} r_s \skp{v}{s}s.
    \]
    The subset $S$ is called \defemph{strongly eutactic},
    when all $r_s$ can be chosen to be equal.
\end{defi}

When $S\subset V$ is eutactic in $V$, then $S$ spans $V$.
So it is often more interesting to know 
whether a set~$S$ is eutactic in its $\reals$-linear span
$\LinH_{\reals}(S)$.
We have the following simple observation, which follows
from $\Ker(\pi_s)= \erz{s}^{\perp}$:
\begin{lemma}
\label{l:eutaxy_subspace}
    A subset $S\subset V$ is eutactic in  its span $U$
    if and only if there are $r_s> 0$ such that
    \[ \sum_{s\in S} r_s \pi_s = p_U,
    \]
    where $p_U\colon V\to V$ is the orthogonal projection
    from $V$ onto $U$, and $\pi_s\colon V\to V$ is as in
    \cref{def:eutacticset}.
\end{lemma}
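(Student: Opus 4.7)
The plan is to exploit the fact that for any $s\in U$, the rank-one operator $\pi_s$ has image contained in $\erz{s}\subseteq U$ and annihilates $U^{\perp}$. This will reduce the identity $\sum_{s\in S} r_s \pi_s = p_U$ to the intrinsic eutaxy condition on $U$ plus an automatic equality on $U^{\perp}$.

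First I would decompose $V = U \oplus U^{\perp}$ and analyse both sides of the claimed identity on each summand. For $v\in U^{\perp}$ and $s\in S\subseteq U$, we have $\skp{v}{s}=0$, hence $\pi_s(v)=0$; so $\sum_{s\in S} r_s \pi_s$ vanishes on $U^{\perp}$ for \emph{any} choice of coefficients $r_s$. Since $p_U$ also vanishes on $U^{\perp}$, both sides of $\sum r_s\pi_s = p_U$ automatically agree there. Thus the equation reduces to checking agreement on $U$.

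Next I would observe that for $s\in U$, the operator $\pi_s$ maps $V$ into $U$, and in particular $\pi_s|_U$ coincides with the analogous rank-one operator defined intrinsically on the Euclidean space $(U,\eskp|_{U\times U})$. Similarly $p_U|_U = \id_U$. Hence the equation $\sum_{s\in S} r_s \pi_s = p_U$ restricted to $U$ is exactly the defining equation
\[
    \id_U = \sum_{s\in S} r_s \pi_s^{U}
\]
for $S$ being eutactic in $U$ (where $\pi_s^U$ is computed inside $U$). Combining the two halves gives the equivalence.

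No step here seems to present a genuine obstacle; the argument is essentially the orthogonal decomposition, and the only thing to be careful about is noting that the coefficients $r_s$ which witness eutaxy of $S$ in $U$ are the very same coefficients that realise the projection identity on all of $V$, because $\pi_s$ kills $U^{\perp}$ irrespective of $r_s$. I would finish by noting that $S$ must span $U$ for either condition to hold (indeed, $\operatorname{Im}(\sum r_s\pi_s)\subseteq \LinH_{\reals}(S)$ must equal $\operatorname{Im}(p_U)=U$), which justifies the implicit assumption $U=\LinH_{\reals}(S)$ in the statement.
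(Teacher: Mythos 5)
Your proof is correct and is essentially the same argument the paper has in mind: the paper derives the lemma from the single observation $\Ker(\pi_s)=\erz{s}^{\perp}$, and your orthogonal decomposition $V=U\oplus U^{\perp}$ (both sides vanish on $U^{\perp}$; on $U$ the identity reduces to the intrinsic eutaxy condition) is precisely that observation spelled out in detail.
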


\begin{defi}
    A lattice $\Lambda\subset V$ is called 
    (strongly) eutactic  in $V$   
    if the set
    \[ \Mvecs(\Lambda):= \menge{s\in\Lambda}{\norm{s} = \mindis(\Lambda)}
    \]
    is (strongly) eutactic in $V$.  
\end{defi}

Let $G$ be a finite group.
We recall some facts on the group algebra $\reals G$
as a Euclidean space, which we will need.
The group algebra $\reals G$ has an involution, given by
\[ \left( \sum_{g\in G} r_g g\right)^{*}
    = \sum_{g\in G} r_g g^{-1}.
\]
The canonical inner product on $\reals G$ can be defined as
$\skp{r}{s} = \eps(rs^{*})= \eps(s^*r)$, where
$\eps\colon \reals G \to \reals$ sends $\sum_{g\in G}r_g g$ to $r_1$.
Let us write
\[ e_G := \frac{1}{\abs{G}} \sum_{g\in G} g \in \reals G. 
\]
The next result is easy to derive from standard results
in representation theory
(cf. \cite[\S~2.6, Thm.~8]{SerreLRFG}),
but for the sake of completeness, we include a short proof here.
\begin{lemma}\label{l:proj_aug}
    The map $ r \mapsto (1-e_G)r$
    is the orthogonal projection from $\reals G$
    onto 
    \[ \LinH_{\reals} (\Delta G)
      = \menge[\bigg]{\sum_{g\in G} r_g g \in \reals G
                  }{\sum_g r_g = 0}.
    \]
\end{lemma}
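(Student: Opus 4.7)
The plan is to show that the linear map $P\colon\reals G\to \reals G$, $r\mapsto (1-e_G)r$, is an idempotent with image $\LinH_\reals(\Delta G)$, and moreover is self-adjoint with respect to $\eskp$; since an idempotent linear operator on a Euclidean space is an orthogonal projection onto its image if and only if it is self-adjoint, this suffices.

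The computations rest on two elementary properties of $e_G$. First, for every $h\in G$ one has $h\cdot e_G = e_G\cdot h = e_G$, because left or right multiplication by $h$ merely permutes the summands of $\frac{1}{\abs{G}}\sum_{g\in G}g$. Extending by linearity, for any $r=\sum_g r_g g \in \reals G$ this yields
\[ e_G\cdot r = r\cdot e_G = \sigma(r)\, e_G, \quad \text{where } \sigma(r) := \sum_{g\in G} r_g. \]
In particular $e_G^2 = e_G$, so $(1-e_G)^2 = 1-e_G$, and the operator $P$ is idempotent. Second, the involution sends $e_G$ to itself, $e_G^{*}=e_G$, because $g\mapsto g^{-1}$ also permutes $G$; hence $(1-e_G)^*=1-e_G$.

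From these facts, the image is identified as $\LinH_\reals(\Delta G)$: for any $r$, the element $(1-e_G)r = r - \sigma(r)e_G$ has augmentation $\sigma(r)-\sigma(r)=0$, so it lies in $\LinH_\reals(\Delta G)$; conversely, if $\sigma(r)=0$ then $(1-e_G)r = r$, showing $P$ restricts to the identity on $\LinH_\reals(\Delta G)$. For self-adjointness, using $\skp{x}{y}=\eps(xy^*)$ together with the cyclic identity $\eps(ab)=\eps(ba)$ (which holds because both sides equal $\sum_{g\in G}a_g b_{g^{-1}}$) and $e_G^*=e_G$:
\[ \skp{(1-e_G)r}{s} = \eps\bigl((1-e_G)rs^{*}\bigr) = \eps\bigl(rs^{*}(1-e_G)\bigr) = \skp{r}{(1-e_G)s}. \]

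There is no real obstacle here; the main point is simply to keep track of the fact that $G$ need not be abelian, so one must handle left- and right-multiplications carefully and invoke the cyclic property of $\eps$ rather than commutativity of $\reals G$. An alternative, slightly shorter route would be to verify directly that $e_G\perp \Delta G$ (via $\skp{e_G}{g-1} = \eps(e_G(g^{-1}-1)) = \eps(e_G)-\eps(e_G)=0$), giving the orthogonal decomposition $\reals G = \LinH_\reals(\Delta G)\oplus \reals e_G$, and then observing that $P$ is the identity on the first summand and zero on the second.
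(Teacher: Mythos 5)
Your proof is correct, and its main route is packaged differently from the paper's. You verify that left multiplication by $1-e_G$ is an idempotent, self-adjoint operator whose image is exactly $\LinH_{\reals}(\Delta G)$ (self-adjointness via the cyclic identity $\eps(ab)=\eps(ba)$ together with $e_G^{*}=e_G$), and then invoke the standard characterization of orthogonal projections as self-adjoint idempotents. The paper instead verifies the defining property of the projection directly: it writes $r = e_G r + (1-e_G)r$, notes that $(1-e_G)r$ has coefficient sum zero, and checks that the two summands are orthogonal via $\eps\bigl(r^{*}(1-e_G)e_G r\bigr)=0$, i.e.\ using $(1-e_G)e_G=0$ and $e_G^{*}=e_G$, with no appeal to an external characterization. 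Both arguments rest on exactly the same elementary facts about $e_G$ (it is a self-adjoint central idempotent with $e_G r = \sigma(r)e_G$); yours trades the explicit orthogonal decomposition for a standard linear-algebra lemma, which keeps the group-ring computation slightly shorter but makes the proof less self-contained, and it does handle the noncommutative case cleanly via the trace property of $\eps$. Notably, the ``alternative, slightly shorter route'' you sketch at the end --- showing $e_G \perp \Delta G$ and using the decomposition $\reals G = \LinH_{\reals}(\Delta G)\oplus \reals e_G$, with $P$ the identity on the first summand and zero on the second --- is essentially the paper's own proof.
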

\begin{proof}
    Write $\iota\colon \reals G \to \reals$ for the 
    augmentation map (coefficient sum),
    $\iota\left( \sum_{ g\in G } r_g g\right) = \sum_{ g\in G } r_g$,
    so that $\Ker \iota = \LinH_{\reals}( \Delta G)$.
    Clearly $ge_G = e_G = e_G g$ for all $g\in G$. 
    Thus $e_G^2 = e_G$
    and 
    $ e_Gr = \iota(r) e_G $
    for all $r\in \reals G$.
    We can write $r= e_G r + (1-e_G)r$
    for any $r\in \reals G$.
    We have
    $\skp{ e_G r }{ (1-e_G)r } = 
       \eps( r^*(1-e_G) e_G r ) = \eps(0)=0$,
    and $(1-e_G)r$ is in $\Ker \iota = \LinH_{\reals}( \Delta G)$.
    The result follows.
\end{proof}

\begin{lemma}\label{l:orbitsumproj}
  Let $s\in \reals G$ and let $\pi_s\colon \reals G\to \reals G$
  be defined as in \cref{def:eutacticset},
  with respect to the canonical inner product on $\reals G$.
  Then 
  \[ r \sum_{g\in G} \pi_{sg} = ss^{*}r
  \]
  for all $r\in \reals G$,
  that is,
  $\sum_g \pi_{sg}$ is left multiplication with
  $ss^*$.
\end{lemma}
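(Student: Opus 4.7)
The plan is to reduce, by $\reals$-linearity in $r$, to the case $r = h$ for a single group element $h \in G$: both sides of the identity $r \sum_{g \in G} \pi_{sg} = ss^{*} r$ are linear in $r$, and $G$ is an $\reals$-basis of $\reals G$. Writing $s = \sum_{k \in G} s_k k$ and using that $G$ is in fact an orthonormal basis with respect to the canonical inner product, I would compute $\skp{h}{sg}$ as the coefficient of $h$ in $sg = \sum_{k} s_k (kg)$, which is $s_{hg^{-1}}$.

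Substituting this into the left-hand side gives
\[
  h \sum_{g \in G} \pi_{sg}
  = \sum_{g \in G} \skp{h}{sg}\, sg
  = \sum_{g \in G} s_{hg^{-1}}\, sg.
\]
Reindexing by $m = hg^{-1}$, so that $g = m^{-1} h$, turns this into $s \bigl(\sum_{m \in G} s_m m^{-1}\bigr) h$. By the definition of the involution, $s^{*} = \sum_{m \in G} s_m m^{-1}$, so this simplifies to $ss^{*} h$, matching the right-hand side evaluated at $h$.

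I do not expect a real obstacle here: once $\skp{h}{sg}$ is recognized as a coefficient of $s$ and the $g$-sum is reindexed, the answer falls out by direct identification of $s^{*}$. The only point requiring some care is tracking the direction of the involution and the order of multiplication, so that one arrives at left multiplication by $ss^{*}$ (as claimed) rather than by $s^{*}s$ or at a right multiplication. This is governed by whether the reindexing is $g = m^{-1} h$ or $g = h m^{-1}$; the first, coming from $m = hg^{-1}$, is the one that produces the stated identity. An essentially equivalent alternative, if one prefers a coordinate-free argument, would be to use the translation-invariance $\skp{v}{sg} = \skp{v g^{-1}}{s}$ and the adjointness $\skp{a}{bs} = \skp{as^{*}}{b}$ of right multiplication by $s$, but the direct basis computation above seems shortest.
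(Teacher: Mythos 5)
Your proof is correct and takes essentially the same route as the paper: a direct coefficient computation with the orthonormal basis $G$ followed by reindexing the sum to recognize $s^{*}$; the only (cosmetic) difference is that you first reduce to $r=h\in G$ by linearity, while the paper carries a general $r$ through the same double sum. Your care with the order of multiplication in the reindexing ($g=m^{-1}h$) is exactly what makes the identity come out as left multiplication by $ss^{*}$, valid even for nonabelian $G$, just as in the paper.
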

\begin{proof}
    Write $r=\sum_{x\in G} r_x x$ and
    $s=\sum_{y\in G} s_y y$.
    Then
    \begin{align*}
        r\sum_{g\in G} \pi_{sg} 
           &= \sum_{g\in G} \skp{r}{sg} sg
        \\ &= \sum_{y,g\in G} r_{yg}s_y \, sg
        = s \sum_{x,y\in G} r_x s_y \, y^{-1}x
        = s s^* r.
        \qedhere
    \end{align*}
\end{proof}

When $A$ is a finite abelian group and
$A\not\iso C_2$ or $C_3$,
then
\[ \Mvecs(\latt(A)) = S(A) := \menge{v\in \latt(A)}{\norm{v} = 2}.
\]
To show that $\latt(A)$ is eutactic in its span for $A\not\iso C_4$,
we have to show that the set $S(A)$
is eutactic in its span, when
$A \not\iso C_2$, $C_3$ or $C_4$.
(The lattices $\latt(C_2)$ and $\latt(C_3)$ are also eutactic,
but $\Mvecs(\latt(A)) \neq S(A)$ for these groups,
and $\latt(C_4)$ is not eutactic, since
$S(A)=\Mvecs(\latt(C_4))$ spans only a subspace of the span
of $\latt(C_4)$.)

The group $A$ acts by right multiplication on $\reals A$,
and this action preserves the scalar product.
The set $S(A)$ is invariant and decomposes into $A$-orbits.
In \cref{l:minLA_param}, we have described a 
parametrization of
$S(A)$, namely
\[ S(A)
       = \menge{(1-a)(1-b)g}{(a,b)\in \Omega(A), g\in A},
\]
where
\[ \Omega(A) = \menge{ (a,b)\in A\times A
                     }{ a\neq 1 \neq b \neq a^{\pm 1} 
                     }.
\]
(For each element $s\in S(A)$, there are actually
exactly four triples $(a,b,g)\in \Omega(A)$
such that $s=(1-a)(1-b)g$.)
By \cref{l:orbitsumproj}, the problem of showing
that $\latt(A)$ is eutactic is reduced to 
a problem in the group algebra:

\begin{lemma}\label{l:gacrit_eut}
    For $a$, $b\in A$, write $m(a,b) = (a-1)(b-1)$.
    Suppose there are
    $\gamma_{a,b}>0$ for $(a,b)\in \Omega(A)$ such that
    \[ 1-e_A
         = \sum_{(a,b)\in\Omega(A)} 
                \gamma_{a,b} \, m(a,b)m(a,b)^*.
    \]
    Then $S(A)$ is eutactic.
    Moreover, 
    $S(A)$ is strongly eutactic if and only if there is a $\gamma>0$
    such that
    \[ 1-e_A = \gamma \sum_{(a,b)\in\Omega(A)} m(a,b)m(a,b)^* .
    \]
\end{lemma}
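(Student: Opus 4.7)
The plan is to directly translate each ingredient into group-algebra language using the preceding lemmas: \cref{l:eutaxy_subspace} reduces eutaxy to an operator identity; \cref{l:proj_aug} identifies the relevant projection as left multiplication by $1-e_A$; \cref{l:orbitsumproj} converts orbit sums of $\pi_{sg}$ into left multiplication by $ss^{*}$; and \cref{l:minLA_param} supplies the $4$-to-$1$ parametrization of $S(A)$. Together these reduce the claim to a bookkeeping rearrangement of the hypothesis.

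In detail, write $U := \LinH_{\reals}(\Delta A)$ and let $L_c$ denote left multiplication by $c \in \reals A$. For each $(a,b) \in \Omega(A)$, applying \cref{l:orbitsumproj} with $s = m(a,b)$ gives $\sum_{g \in A} \pi_{m(a,b)g} = L_{m(a,b)m(a,b)^{*}}$. Taking the linear combination with coefficients $\gamma_{a,b}$ and inserting the hypothesis yields
\[
  \sum_{(a,b) \in \Omega(A)} \gamma_{a,b} \sum_{g \in A} \pi_{m(a,b)g}
  \;=\; L_{1-e_A} \;=\; p_U,
\]
the second equality by \cref{l:proj_aug}. Now I regroup the double sum on the left by the value $s := m(a,b)g \in S(A)$: since each such $s$ has exactly four preimages under $(a,b,g) \mapsto m(a,b)g$, setting
\[
  r_s := \sum_{(a,b,g) \,:\, m(a,b)g = s} \gamma_{a,b}
\]
produces strictly positive scalars with $\sum_{s \in S(A)} r_s \pi_s = p_U$. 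Since the two sides of this operator identity have the same image, $\LinH_{\reals}(S(A)) = U$, so \cref{l:eutaxy_subspace} gives that $S(A)$ is eutactic in its span. (Note $\pi_s = \pi_{-s}$, so the signs in $m(a,b) = (a-1)(b-1)$ versus $(1-a)(1-b)$ cancel harmlessly.)

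For the \emph{moreover} statement, the direction $\Leftarrow$ is immediate: taking all $\gamma_{a,b}$ equal to a common $\gamma$ in the first part yields $r_s = 4\gamma$ independently of $s$, so $S(A)$ is strongly eutactic. Conversely, if $S(A)$ is strongly eutactic with common weight $r > 0$, then running the same computation in reverse gives
\[
  r \sum_{s \in S(A)} \pi_s
  \;=\; \tfrac{r}{4} \sum_{(a,b) \in \Omega(A)} \sum_{g \in A} \pi_{m(a,b)g}
  \;=\; L_{\frac{r}{4} \sum_{(a,b)} m(a,b)m(a,b)^{*}},
\]
and equating with $p_U = L_{1-e_A}$, together with the faithfulness of left multiplication on $\reals A$ (just evaluate at $1$), forces $\gamma := r/4$ to satisfy the claimed identity. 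The main obstacle, such as it is, is the careful accounting of the factor $4$ introduced by the $4$-to-$1$ parametrization when passing between sums over triples $(a,b,g)$ and sums over $s \in S(A)$; beyond this, the proof is a routine assembly of the previously established lemmas.
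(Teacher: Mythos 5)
Your proof is correct and follows essentially the same route as the paper's: translate the hypothesis via \cref{l:orbitsumproj} and \cref{l:proj_aug} into the operator identity $\sum_{s\in S(A)} r_s\pi_s = p_U$, regroup the double sum using the $4$-to-$1$ parametrization of \cref{l:minLA_param}, and run the computation in reverse for the converse half of the strong-eutaxy claim. If anything, your bookkeeping is slightly more careful than the paper's: you sum $\gamma_{a,b}$ over preimage \emph{triples} $(a,b,g)$, which is the correct regrouping (the paper's coefficients $\lambda_s$ sum over pairs $(a,b)$ admitting some $g$, which undercounts precisely when $a$ and $b$ are both involutions, since then two distinct values of $g$ yield the same minimal vector), and you make explicit both the equality of spans and the injectivity of $c\mapsto(r\mapsto cr)$, which the paper leaves implicit.
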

\begin{proof}
    If the condition holds,
    then
    \begin{align*}
       r
       \sum_{(a,b)\in\Omega(A)}
       \sum_{g\in G} \gamma_{a,b} \pi_{m(a,b)g}
       &= \sum_{(a,b)\in\Omega(A)} \gamma_{a,b}\, m(a,b)m(a,b)^* r
       \\ &= (1-e_A)r.
    \end{align*}
    On the other hand, the double sum on the left 
    can be rewritten as
    $\sum_{s\in S(A)} \lambda_s \pi_s$, 
    with
    \[ \lambda_s 
         := \sum_{\substack{(a,b)\in\Omega(A)\colon \\
                           \mathclap{\exists g\in A\colon s = m(a,b)g}
                          }
                }
             \gamma_{a,b} 
        \; > 0.
    \]
    By \cref{l:eutaxy_subspace,l:proj_aug}, $S(A)$ is eutactic.
    If the $\gamma_{a,b}$'s are all equal, 
    then the $\lambda_s$'s are all equal and
    $S(A)$ is strongly eutactic.
    Conversely, when $S(A)$ is strongly eutactic,
    then there is $\lambda>0$ such that
    \[ r \sum_{s\in S(A)} \pi_s = \lambda (1-e_A) r
    \]
    for all $r\in \reals G$.
    It follows from \cref{l:minLA_param,l:orbitsumproj} that
    \[ r \sum_{s\in S(A)} \pi_s = 
       (1/4) \sum_{(a,b)\in\Omega(A)} m(a,b)m(a,b)^* r.
    \]
    Thus the result.
\end{proof}

\begin{lemma}
\label{l:sum_mab_om}
    Let $A$ be a finite abelian group, and let
    $T:= \menge{a\in A}{a^2 = 1}$ and
    $S:= \menge{a^2}{a\in A}$.
    Write $m(a,b)=(1-a)(1-b)$ as before.
    Then
    \begin{align*}
      \MoveEqLeft[4]
      \sum_{(a,b)\in\Omega(A)} 
               m(a,b)m(a,b)^*
        \\ 
        &= 4 \abs{A}(\abs{A}-4)(1-e_A) + 4 \abs{A}(1-e_S)
         + 8 \abs{T}(1-e_T).
    \end{align*}
\end{lemma}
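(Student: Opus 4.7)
The plan is to reduce the product $m(a,b)m(a,b)^*$ to a product of two one-variable factors so the double sum decouples.  Since $A$ is abelian and $(\ )^*$ inverts group elements, one has
\[ m(a,b)m(a,b)^* = (1-a)(1-a^{-1})(1-b)(1-b^{-1}) = c(a)\,c(b), \]
where $c(a) := 2 - a - a^{-1} \in \reals A$.  Then I would fix $a\in A\setminus\{1\}$ and perform the inner sum over $b$.  The excluded values of $b$ are $\{1, a, a^{-1}\}$, which has size $2$ if $a \in T$ and size $3$ if $a \notin T$.  Since $c(1)=0$ and $c(a^{-1})=c(a)$, the inner sum $\sum_{b\colon (a,b)\in\Omega(A)} c(b)$ equals $\Sigma - c(a)$ when $a\in T\setminus\{1\}$, and $\Sigma - 2c(a)$ otherwise, where $\Sigma := \sum_{a\in A} c(a)$.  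Multiplying by $c(a)$ and summing yields
\[ \sum_{(a,b)\in\Omega(A)} c(a)c(b)
   = \Sigma^{2} - 2\sum_{a\in A} c(a)^{2} + \sum_{a\in T} c(a)^{2}. \]

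It then remains to evaluate the three sums separately in the group algebra. For $\Sigma$: using $\sum_{a\in A} a = \abs{A}e_A$ gives $\Sigma = 2\abs{A}(1-e_A)$, and the idempotence $e_A^{2}=e_A$ yields $\Sigma^{2} = 4\abs{A}^{2}(1-e_A)$.  For $\sum_{a\in A} c(a)^2$: expand $c(a)^2 = 6 - 4a - 4a^{-1} + a^2 + a^{-2}$ and sum termwise.  The one nontrivial sum is $\sum_{a\in A} a^2$; the squaring map $A\to A$ is a homomorphism with kernel $T$ and image $S$, so it is $\abs{T}$-to-one onto $S$, and $\abs{T}\abs{S}=\abs{A}$ gives $\sum_{a\in A} a^{2} = \abs{T}\abs{S}\,e_S = \abs{A} e_S$.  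Hence $\sum_{a\in A} c(a)^{2} = 6\abs{A} - 8\abs{A}e_A + 2\abs{A}e_S$.  For $\sum_{a\in T} c(a)^2$: since $a^2=1$ for $a\in T$, one has $c(a) = 2(1-a)$ and $c(a)^2 = 4(1-a)^{2} = 8(1-a)$, giving $\sum_{a\in T} c(a)^{2} = 8\abs{T}(1-e_T)$.

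Plugging these three values into the formula and regrouping the $(1-e_A)$-, $e_S$- and $e_T$-terms yields precisely $4\abs{A}(\abs{A}-4)(1-e_A) + 4\abs{A}(1-e_S) + 8\abs{T}(1-e_T)$, which is a mechanical, linear calculation.  There is no real obstacle; the point requiring attention is the case distinction $a=a^{-1}$ versus $a\neq a^{-1}$ in the inner sum, because it is exactly this bookkeeping which produces the extra $8\abs{T}(1-e_T)$ term in the answer.  A small, but useful, observation is that $c(1)=0$, so one may freely extend all relevant sums to run over $A$ (respectively $T$) instead of over $A\setminus\{1\}$ (respectively $T\setminus\{1\}$), which keeps the formulas symmetric.
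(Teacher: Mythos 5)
Your proof is correct and follows essentially the same route as the paper's: both reduce the sum over $\Omega(A)$, via the factorization $m(a,b)m(a,b)^* = c(a)c(b)$ with $c(a) = (1-a)(1-a^{-1})$, to the identical expression $\Sigma^2 - 2\sum_{a\in A} c(a)^2 + \sum_{t\in T} c(t)^2$, and then evaluate the three sums in exactly the same way (including $\sum_a a^2 = \abs{A}e_S$ and $c(t)^2 = 8(1-t)$ for $t\in T$). The only cosmetic difference is bookkeeping: the paper handles the excluded diagonals $b=a$, $b=a^{-1}$ by inclusion--exclusion, while you fix $a$ and split the inner sum over $b$ according to whether $a\in T$; these are the same computation.
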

\begin{proof}
   Notice that $m(1,b)=m(a,1)=0$ and
   $m(a,a)m(a,a)^* = (1-a)^2(1-a^{-1})^2 = m(a,a^{-1})m(a,a^{-1})^*$.
   This yields 
   \begin{align*}
      \MoveEqLeft[4] \sum_{(a,b)\in\Omega(A)} 
                     m(a,b)m(a,b)^*
       \\
       & = \begin{aligned}[t]
              \sum_{a,b\in A} m(a,b)m(a,b)^*
               &  - \sum_{a\in A} m(a,a)m(a,a)^*
            \\ & - \sum_{a\in A} m(a,a^{-1})m(a,a^{-1})^*
            \\ & + \sum_{t\in T} m(t,t)m(t,t)^* 
           \end{aligned}
       \\ & = \left( \sum_{a\in A} (1-a)(1-a^{-1}) \right)^2
             - 2 \sum_{a\in A} (1-a)^2(1-a^{-1})^2
             + \sum_{t\in T} (1-t)^4.
   \end{align*}
   We have $(1-a)(1-a^{-1}) = 2-a-a^{-1}$ and thus the 
   first sum on the last line equals
   \[ \sum_{a\in A}(2-a-a^{-1}) = 2\abs{A}(1-e_A).
   \]
   Next, we have
   \begin{align*}
   \sum_{a\in A} (1-a)^2(1-a^{-1})^2
      & = \sum_{a\in A} (2-a-a^{-1})^2
      \\ & = \sum_{a\in A} (4 + a^2 + a^{-2} -4a-4a^{-1} + 2)
      \\ & = 6\abs{A} - 8\abs{A} e_A + 2\abs{A} e_S
      \\ & = 8\abs{A}(1-e_A)- 2\abs{A}(1-e_S).      
   \end{align*}
   Finally, for $t^2 = 1$ we have
   $(1-t)^4 = 1 -4t+6-4t+1 = 8(1-t)$ and thus
   \[ \sum_{t\in T} (1-t)^4 = 8 \abs{T}(1-e_T).
   \]
   Therefore,
   \begin{align*}
     \MoveEqLeft \sum_{(a,b)\in\Omega(A)} 
                          m(a,b)m(a,b)^*
          \\
          & = 4\abs{A}^2(1-e_A)^2
           -16\abs{A}(1-e_A) + 4\abs{A}(1-e_S)
           + 8 \abs{T}(1-e_T)
       \\ & = 4 \abs{A}(\abs{A}-4)(1-e_A) + 4 \abs{A}(1-e_S)
                + 8 \abs{T}(1-e_T).\qedhere
   \end{align*}
\end{proof}

As a corollary, we get an alternative proof
of the following result
which was first proved by Böttcher et al.~\cite{BoettcherEFGM19}:
\begin{cor}
\label{c:str_eutac}
   Let $A$ be a finite abelian group of size at least~$4$.
   Then $S(A)$ is strongly eutactic in its span 
   if and only if
   $A$ has odd order or $A$ is elementary abelian.
\end{cor}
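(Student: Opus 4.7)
By combining \cref{l:gacrit_eut,l:sum_mab_om}, $S(A)$ is strongly eutactic in its span if and only if there exists $\gamma > 0$ satisfying
\[
1 - e_A \;=\; \gamma \bigl[\, 4\abs{A}(\abs{A}-4)(1-e_A) + 4\abs{A}(1-e_S) + 8\abs{T}(1-e_T)\,\bigr].
\]
Call this identity $(\ast)$. The ``if'' direction is a direct verification: if $\abs{A}$ is odd, then $T = \{1\}$ and $S = A$, so $1 - e_T = 0$ and $1 - e_S = 1 - e_A$, reducing $(\ast)$ to a scalar equation solved by $\gamma = 1/(4\abs{A}(\abs{A}-3))$. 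If $A$ is elementary abelian, then $T = A$, $S = \{1\}$, so $1-e_S=0$ and $1-e_T=1-e_A$, and $\gamma = 1/(4\abs{A}(\abs{A}-2))$ works.

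For the converse I will apply characters. Each $\chi\colon A \to \mathbb{C}^\times$ extends to an algebra homomorphism $\chi\colon \mathbb{C}A \to \mathbb{C}$; for any subgroup $X \leq A$ one has $\chi(e_X) = 1$ if $\chi|_X \equiv 1$ and $\chi(e_X) = 0$ otherwise. I will use two duality facts: $\chi|_S$ is trivial iff $\chi^2 = 1$, since $S$ is the image of the squaring map on $A$; and $\chi|_T$ is trivial iff $\chi$ is a square in $\widehat{A}$, because $A/T \iso S$ under squaring dualizes to the statement that the annihilator of $T$ in $\widehat A$ is precisely the image of the squaring map on $\widehat A$. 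Evaluating $(\ast)$ at a nontrivial $\chi$ yields $1 = \gamma\cdot c(\chi)$, where $c(\chi) = 4\abs{A}(\abs{A}-4)$, augmented by $4\abs{A}$ when $\chi^2 \neq 1$ and by $8\abs{T}$ when $\chi$ is not a square in $\widehat{A}$. Hence strong eutaxy is equivalent to $c(\chi)$ being constant on the set of nontrivial characters.

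Assume now that $A$ has even order and is not elementary abelian, so that $\{1\} \neq T \neq A$. By duality, $\widehat{A} \iso A$ contains both a character $\chi_1$ of order exactly $2$ and a character $\chi_2$ of order greater than $2$. A quick computation gives $c(\chi_2) - c(\chi_1) = 4\abs{A} + 8\abs{T}\,\eps$ with $\eps \in \{-1, 0, 1\}$, which can vanish only if $\abs{A} = 2\abs{T}$; so when $\abs{A} \neq 2\abs{T}$, we are done. The main obstacle is the borderline case $\abs{T} = \abs{A}/2$: here $\abs{A/T} = 2$ forces $a^4 = 1$ for all $a \in A$, and combined with the non-elementary-abelian assumption the structure theorem forces $A \iso C_4 \times C_2^k$ for some $k \geq 0$. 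In this structure, $\widehat{A}$ has a character $\psi$ of order $4$: the character $\psi^2$ has order $2$ and is a square in $\widehat{A}$, while $\psi$ itself is not a square (otherwise $\widehat{A} \iso A$ would contain an element of order $8$). Hence $c(\psi) - c(\psi^2) = 4\abs{A} + 8\abs{T} > 0$, again contradicting constancy of $c$ and completing the argument.
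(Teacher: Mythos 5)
Your proof is correct, and its first half coincides with the paper's: the reduction of strong eutaxy to the identity $(\ast)$ by combining \cref{l:gacrit_eut} with \cref{l:sum_mab_om}, and the verification of the two ``if'' cases by direct substitution, are exactly the paper's argument (note only that your ``elementary abelian'' computation tacitly treats $2$-groups; odd elementary abelian groups fall under the odd case, so no harm done). Where you genuinely diverge is the converse. The paper settles it in one line: when $A$ is even and not elementary abelian, $S$ and $T$ are both proper nontrivial subgroups, and then $4\abs{A}(1-e_S)+8\abs{T}(1-e_T)$ cannot be a scalar multiple of $1-e_A$ --- for instance, any $g\in A\setminus(S\cup T)$ (which exists, a group never being the union of two proper subgroups) has coefficient $0$ there, forcing the purported multiple to be zero and contradicting the positive coefficient at $1$. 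You instead run a Fourier-analytic argument: evaluating $(\ast)$ at characters, using $\chi(e_X)\in\{0,1\}$ together with the duality facts that the annihilator of $S$ in $\widehat{A}$ is the $2$-torsion and the annihilator of $T$ is the subgroup of squares $\widehat{A}^2$, you translate strong eutaxy into constancy of $c(\chi)$ over nontrivial $\chi$, and you refute constancy by comparing a character of order $2$ with one of order greater than $2$, treating separately the borderline case $\abs{A}=2\abs{T}$ (correctly identified as $A\iso C_4\times C_2^k$, and correctly settled by an order-$4$ character $\psi$ with $\psi\notin\widehat{A}^2$ but $\psi^2\in\widehat{A}^2$). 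All of these steps check out. The trade-off: the paper's coefficient argument is shorter and needs no duality or structure theory, while yours is heavier but more informative --- it effectively diagonalizes the operator $\sum_{s\in S(A)}\pi_s$, exhibiting its full spectrum $\{c(\chi)\}$ and pinpointing exactly which characters obstruct strong eutaxy (for $C_4$, the value $c(\chi)=0$ at the order-$2$ character reflects that the minimal vectors of $\latt(C_4)$ do not even span). One pedantic point: ``$c(\chi)$ constant'' should strictly be ``constant and positive,'' though positivity follows at once from your formula for $c(\chi)$ when $\abs{A}\geq 4$.
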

\begin{proof}
   By \cref{l:gacrit_eut},
   $S(A)$ is strongly eutactic if and only if
   \[ \sum_{(a,b)\in\Omega(A)} m(a,b)m(a,b)^*
   \] 
   is a positive scalar multiple of $(1-e_A)$.
   Now when $A$ is odd, then $S=A$ and $T=\{1\}$,
   so \cref{l:sum_mab_om} yields
   \[ \sum_{(a,b)\in \Omega(A)} m(a,b)m(a,b)^*
       =\abs{A}(\abs{A}-3)(1-e_A) \, .
   \]
   When $A$ is elementary $2$-abelian, then
   $S=\{1\}$ and $T=A$, and we get 
   \[ \sum_{ (a,b)\in \Omega(A) } m(a,b)m(a,b)^*
      = \abs{A}(\abs{A}-2)(1-e_A) \,.
   \]
   But in all other cases, we have
   $\{1\} < S, T < A$, and the 
   right hand side in \cref{l:sum_mab_om}
   is not a scalar multiple of $(1-e_A)$.
\end{proof}

To show that $\latt(A)$ is eutactic in general,
the strategy is to express the annoying term
$4\abs{A}(1-e_S) + 8\abs{T}(1-e_T)$
in \cref{l:sum_mab_om}
as $\reals$-linear combinations of 
some of the elements $m(a,b)m(a,b)^*$,
with coefficients strictly smaller than~$1$.
By subtracting, we get an expression
of $4\abs{A}(\abs{A}-4)(1-e_A)$ as a positive combination
of the elements $m(a,b)m(a,b)^*$,
as required by \cref{l:gacrit_eut}.

\begin{lemma}
\label{l:subgr_eB_mab}
  Let $B < A$ be a subgroup and $g\in A\setminus B$.
  Write $m(a,b)= (1-a)(1-b)$.
  Then
  \begin{align*}
  \sum_{a,b\in B} m(a,bg)m(a,bg)^*
   &= 4\abs{B}^2(1-e_B)
  \end{align*}
\end{lemma}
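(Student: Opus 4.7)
The plan is to exploit the fact that the summand factors over the two summation variables. Since $A$ is abelian, $(bg)^{-1}=b^{-1}g^{-1}$ and everything commutes in $\reals A$, so
\[
   m(a,bg)m(a,bg)^{*} = (1-a)(1-a^{-1})\cdot(1-bg)(1-b^{-1}g^{-1}).
\]
The first factor depends only on $a$ and the second only on $b$; consequently the double sum factors as
\[
   \sum_{a,b\in B} m(a,bg)m(a,bg)^{*}
   = \Bigl(\sum_{a\in B}(1-a)(1-a^{-1})\Bigr)\Bigl(\sum_{b\in B}(1-bg)(1-b^{-1}g^{-1})\Bigr).
\]

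Next I would compute each factor separately, using the obvious identity $\sum_{b\in B} b = \abs{B}\,e_B$ (and similarly for $\sum b^{-1}$). Expanding the first factor gives $\sum_{a\in B}(2-a-a^{-1}) = 2\abs{B}(1-e_B)$. Expanding the second factor gives
\[
   \sum_{b\in B}(2 - bg - b^{-1}g^{-1})
   = 2\abs{B} - \abs{B}\, g e_B - \abs{B}\, g^{-1}e_B
   = \abs{B}\bigl(2 - g e_B - g^{-1}e_B\bigr).
\]

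Finally, I would simplify the product by absorbing the terms involving $e_B$ into $1-e_B$. Since $A$ is abelian, $e_B$ is central, and $e_B^2 = e_B$, so $e_B\cdot g e_B = g e_B$ and therefore $(1-e_B)ge_B = 0$; likewise $(1-e_B)g^{-1}e_B = 0$. Hence
\[
   (1-e_B)\bigl(2 - g e_B - g^{-1}e_B\bigr) = 2(1-e_B),
\]
and multiplying the two factors yields
\[
   2\abs{B}(1-e_B)\cdot\abs{B}\bigl(2 - g e_B - g^{-1}e_B\bigr)
   = 4\abs{B}^{2}(1-e_B),
\]
as claimed. There is no real obstacle here — the proof is a short direct calculation, and the assumption $g\notin B$ is not even used (if $g\in B$ the second factor already equals $2\abs{B}(1-e_B)$, and one ends at the same answer via $(1-e_B)^{2}=1-e_B$).
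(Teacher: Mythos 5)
Your proof is correct and is essentially the same computation as the paper's: both factor the double sum into $\bigl(\sum_{a\in B}(2-a-a^{-1})\bigr)\bigl(\sum_{b\in B}(2-bg-g^{-1}b^{-1})\bigr)$, evaluate the factors as $2\abs{B}(1-e_B)$ and $\abs{B}(2-e_Bg-g^{-1}e_B)$, and collapse the product using $e_B^2=e_B$. Your closing remark that the hypothesis $g\notin B$ is not needed for the identity itself is also accurate (it matters only in the application, where one needs the pairs $(a,bg)$ to lie in $\Omega(A)$).
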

\begin{proof}
   We have
   \begin{align*}
      \sum_{a,b\in B} m(a,bg)m(a,bg)^* 
      &= \sum_{a,b\in B}
          (1-a)(1-bg)(1-g^{-1}b^{-1})(1-a^{-1})
      \\ &= \sum_{a\in B} (2-a-a^{-1}) 
            \sum_{b\in B} (2-bg-g^{-1}b^{-1})
      \\ &= 2\abs{B}(1-e_B) \abs{B}(2-e_Bg - g^{-1}e_B)
      \\ &= 2\abs{B}^2 (2-e_B g - g^{-1}e_B -2e_B + e_B g + g^{-1}e_B)
      \\ &= 4\abs{B}^2 (1-e_B). 
      \qedhere
   \end{align*}
\end{proof}

\begin{thm}
\label{t:la_eutactic}
   For each abelian group $A$ not isomorphic to
   $C_4$, 
   the lattice $\latt(A)$ is eutactic in its 
   span $\LinH_{\reals}(\latt(A))$.
\end{thm}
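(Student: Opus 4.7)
By \cref{l:gacrit_eut}, it suffices to exhibit scalars $\gamma_{a,b}>0$ for $(a,b)\in \Omega(A)$ satisfying
\[
1-e_A = \sum_{(a,b)\in\Omega(A)} \gamma_{a,b}\, m(a,b)m(a,b)^*.
\]
Since \cref{c:str_eutac} already disposes of the cases $\abs{A}$ odd and $A$ elementary abelian of exponent~$2$, we may assume from now on that $\abs{A}$ is even and $A$ is not elementary abelian; equivalently, both $T$ and $S$ are proper nontrivial subgroups of $A$, and $\abs{A}\geq 6$.

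The starting point is the identity of \cref{l:sum_mab_om},
\[
\sum_{(a,b)\in\Omega(A)} m(a,b)m(a,b)^*
  = 4\abs{A}(\abs{A}-4)(1-e_A) + 4\abs{A}(1-e_S) + 8\abs{T}(1-e_T).
\]
The plan is to rewrite both ``annoying'' terms $4\abs{A}(1-e_S)$ and $8\abs{T}(1-e_T)$ as positive $\reals$-linear combinations of elements $m(x,y)m(x,y)^*$ with $(x,y)\in\Omega(A)$, arranging that each individual $m(x,y)m(x,y)^*$ receives a coefficient strictly smaller than~$1$. Subtracting these rewrites from both sides then realises $4\abs{A}(\abs{A}-4)(1-e_A)$ as a positive combination of the $m(a,b)m(a,b)^*$, and dividing through by the positive scalar $4\abs{A}(\abs{A}-4)$ produces the desired $\gamma_{a,b}$.

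For the rewriting step I would sum \cref{l:subgr_eB_mab} over all $g\in A\setminus B$: for each proper subgroup $B<A$ this yields
\[
1-e_B = \frac{1}{4\abs{B}(\abs{A}-\abs{B})}
       \sum_{\substack{x\in B\setminus\{1\}\\ y\in A\setminus B}} m(x,y)m(x,y)^*.
\]
A short verification shows every pair $(x,y)$ appearing on the right already lies in $\Omega(A)$, since $y\notin B$ together with $x, x^{-1}\in B$ forbids $y\in\{1,x,x^{-1}\}$. Applying this with $B=T$ and $B=S$, substituting into the main identity and collecting coefficients gives the explicit formula
\[
\gamma_{x,y} = 1
  - \frac{2}{\abs{T}(\abs{S}-1)}\,[x\in T,\; y\notin T]
  - \frac{\abs{T}}{\abs{S}(\abs{T}-1)}\,[x\in S,\; y\notin S],
\]
where $[\,\cdot\,]$ denotes an Iverson bracket.

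The main obstacle is to verify $\gamma_{x,y}>0$ for every $(x,y)\in\Omega(A)$. When at most one of the two brackets contributes, positivity reduces to elementary numerical bounds on $\abs{T}$ and $\abs{S}$, and these hold as soon as $A\not\iso C_4$. The delicate case is when both brackets contribute: this forces $x\in (S\cap T)\setminus\{1\}$ and $y\notin S\cup T$, and the subgroup $S\cap T$ is nontrivial precisely when $A$ contains an element of order~$4$. For such overlap pairs the sum of the two subtracted coefficients can exceed~$1$ (for instance $\tfrac{7}{6}$ when $A=C_2\times C_4$), so the uniform averaging above is not enough on its own. Overcoming this overlap --- for example by distributing the $T$- and $S$-corrections non-uniformly across the cosets so as to shift weight off the overlap pairs, or by dispatching a short list of small exceptional groups by direct computation --- is the technical heart of the proof.
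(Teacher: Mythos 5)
Your overall strategy is exactly the paper's: reduce via \cref{l:gacrit_eut}, start from the identity of \cref{l:sum_mab_om}, and absorb the two terms $4\abs{A}(1-e_S)$ and $8\abs{T}(1-e_T)$ into sums of elements $m(x,y)m(x,y)^*$ obtained from \cref{l:subgr_eB_mab}, keeping every coefficient below~$1$. Your coset summation of \cref{l:subgr_eB_mab}, the check that all resulting pairs lie in $\Omega(A)$, and your explicit coefficient formulas are all correct. But there is a genuine gap, which you yourself flag and do not close: because you subtract each correction only from pairs ordered with $x\in B$, $y\notin B$, your per-pair coefficients are twice as large as they need to be, and an overlap pair $x\in(S\cap T)\setminus\{1\}$, $y\notin S\cup T$ can receive total weight
\[
\frac{\abs{T}}{\abs{S}(\abs{T}-1)}+\frac{2}{\abs{T}(\abs{S}-1)} \geq 1,
\]
as indeed happens for $A\iso C_4\times C_2$ (and, one can check, only for that group). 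Listing possible remedies without executing one of them leaves the proof incomplete precisely at what you call its technical heart.

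The missing idea is a simple symmetrization, and it is how the paper avoids any case distinction or exceptional group. Since $m(x,y)=m(y,x)$, every unordered pair with exactly one entry in $B$ occurs \emph{twice} in $\Omega(A)$: once in $B\times(A\setminus B)$ and once in $(A\setminus B)\times B$. Distributing each correction equally over both orderings halves all your coefficients: the $S$-correction then charges each ordered pair $\frac{\abs{T}}{2\abs{S}(\abs{T}-1)}$ and the $T$-correction charges $\frac{1}{\abs{T}(\abs{S}-1)}$, so even an overlap pair receives at most
\[
\frac{\abs{T}}{2\abs{S}(\abs{T}-1)}+\frac{1}{\abs{T}(\abs{S}-1)}
\leq \frac{1}{\abs{S}}+\frac{1}{\abs{T}} < 1,
\]
where the last inequality holds because $\abs{S}\abs{T}=\abs{A}>4$ rules out $\abs{S}=\abs{T}=2$. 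For $C_4\times C_2$ this gives $\frac{7}{12}$ in place of your $\frac{7}{6}$. With this single change (which is one of the remedies you gesture at, made precise), your argument goes through verbatim for all groups, and the resulting proof coincides with the paper's.
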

\begin{proof}
   The cases where $\abs{A}\leq 4$ have already been dealt with,
   so we may assume that $\abs{A}> 4$,
   and $\Mvecs(\latt(A)) = S(A)$.
   By \cref{c:str_eutac}, we can assume that $A$ is neither odd nor
   elementary $2$-abelian, and thus
   both $S$ and $T$ from \cref{l:sum_mab_om} are proper subgroups of
   $A$.
   
   Let $B< A $ be a proper subgroup and suppose
   $1\neq b\in B$ and $a\in A\setminus B$.
   Then $(a,b)$, $(b,a)\in \Omega(A)$,
   with $\Omega(A)$ as above and defined in \cref{l:minLA_param}.
   It follows from \cref{l:subgr_eB_mab} that
   \begin{align*}
       \sum_{a\in A\setminus B, b\in B}
         m(a,b)m(a,b)^* 
        &= \sum_{a\in A\setminus B, b\in B}
         m(b,a)m(b,a)^*
        \\ &= ( \abs{A:B}-1 ) 4 \abs{B}^2 (1-e_B).  
   \end{align*}
   It follows that (with notation as in \cref{l:sum_mab_om})
    we can write
    \begin{align*}
       \MoveEqLeft 4\abs{A}(1-e_S) + 8\abs{T}(1-e_T)
       \\
       &= \frac{4\abs{A}}{8\abs{S}^2 (\abs{A:S}-1)}
          \sum_{(a,b)\in S\times (A\setminus S)
                \cup (A\setminus S)\times S
                }
             m(a,b)m(a,b)^*
       \\ 
       & \qquad   + \frac{8\abs{T}}{8\abs{T}^2 (\abs{A:T}-1)}
          \sum_{(a,b)\in T\times (A\setminus T)
                          \cup (A\setminus T)\times T
                          }
                       m(a,b)m(a,b)^*.
    \end{align*}
    Here, an element $m(a,b)m(a,b)^*$ appears with a coefficient
    at most
    \begin{align*}
      \MoveEqLeft[4]
      \frac{4\abs{A}}{8\abs{S}^2 (\abs{A:S}-1)}
      +
      \frac{8\abs{T}}{8\abs{T}^2 (\abs{A:T}-1)}
      \\
      &=
      \frac{\abs{A:S}}{2\abs{S}(\abs{A:S}-1)}
      + \frac{1}{\abs{T}(\abs{A:T}-1)}
      \\
      &\leq \frac{1}{\abs{S}} + \frac{1}{\abs{T}}
       < 1,       
    \end{align*}
    since $\abs{S}\abs{T}=\abs{A}>4$.
    Now it follows from \cref{l:sum_mab_om} that
    we can write
    \[ 4\abs{A}(\abs{A}-4)(1-e_A)
       = \sum_{ (a,b)\in \Omega(A) } 
          \gamma_{a,b} \, m(a,b)m(a,b)^*,
       \quad 0 < \gamma_{a,b} \leq 1.
    \]
    Then, by \cref{l:gacrit_eut},
    we get that $S(A)$ is eutactic,
    which was to be proved.
\end{proof}

R.~Bacher~\cite[\S~5]{Bacher15} has shown that 
$\latt(A)$ is perfect
for $\abs{A}\geq 7$, except when $A\iso C_4\times C_2$.
Together with Voronoi's classical criterion, we get:

\begin{cor}
    Let $A$ be an abelian group with $\abs{A}\geq 7$
    and $A \not\iso C_4 \times C_2$.
    Then $\latt(A)$ is extreme. 
\end{cor}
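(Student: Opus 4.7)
The plan is to combine three already-established ingredients, namely Voronoi's classical characterization of extreme lattices, Bacher's perfection result cited immediately before the corollary, and the main eutaxy theorem \cref{t:la_eutactic}. Since all three are available to us, the proof should essentially be a one-line deduction, and the only task is to verify that the hypotheses of each input are met by an abelian group $A$ with $\abs{A}\geq 7$ and $A \not\iso C_4 \times C_2$.

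First I would recall Voronoi's criterion (as cited in the introduction via \cite[Theorem~4.6.3]{Martinet03PLES}): a lattice is extreme, i.e.\ a local maximum of the sphere packing density on its similarity class, precisely when it is both eutactic and perfect. So to conclude that $\latt(A)$ is extreme it suffices to verify both properties for any $A$ satisfying our hypotheses.

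For eutaxy, I would apply \cref{t:la_eutactic}, which gives eutaxy of $\latt(A)$ in its real span for every finite abelian group $A$ with $A \not\iso C_4$. Since $\abs{A}\geq 7$ rules out $A\iso C_4$ a fortiori, the eutaxy hypothesis is satisfied. For perfection, I would invoke Bacher's result \cite[\S~5]{Bacher15}, stated in the paragraph immediately preceding the corollary, which asserts perfection of $\latt(A)$ whenever $\abs{A}\geq 7$, with the sole exception $A\iso C_4\times C_2$. Both of these exceptional cases are excluded by our hypotheses, so $\latt(A)$ is perfect as well.

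There is no real obstacle here; the entire proof is bookkeeping. The only thing to watch is that the two exceptional groups from Bacher's theorem (the small groups with $\abs{A}\leq 6$ plus $C_4\times C_2$) and the exceptional group in \cref{t:la_eutactic} (namely $C_4$) are simultaneously excluded by the two hypotheses $\abs{A}\geq 7$ and $A\not\iso C_4\times C_2$. Once this is noted, Voronoi's criterion delivers the conclusion immediately.
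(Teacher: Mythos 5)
Your proposal is correct and is exactly the paper's own (implicit) argument: the paper derives this corollary by combining Bacher's perfection result \cite[\S~5]{Bacher15} with \cref{t:la_eutactic} via Voronoi's criterion, just as you do. Your bookkeeping that the hypotheses $\abs{A}\geq 7$ and $A\not\iso C_4\times C_2$ exclude all exceptional groups of both inputs is the entire content of the proof.
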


\printbibliography[heading=bibintoc]
\end{document}